\newtheorem{theorem}{Theorem}[section]
\newtheorem{corollary}[theorem]{Corollary}
\newtheorem{lemma}[theorem]{Lemma}
\newtheorem{proposition}[theorem]{Proposition}
\newtheorem{definition}[theorem]{Definition}
\newtheorem{remark}[theorem]{Remark}
\newtheorem{example}[theorem]{Example}
\newtheorem{lem-df}[theorem]{Lemma-Definition}
\newtheorem{conjecture}[theorem]{Conjecture}
\newtheorem{rmk-df}[theorem]{Remark-Definition}
\newtheorem{ass}[theorem]{Extra assumption}
\newtheorem*{thm-main}{Theorem \ref{main-thm}}
\newtheorem*{cor-main}{Corollary \ref{cor-adj-global}}
\newcommand{\beq}{\begin{equation}}
\newcommand{\enq}{\end{equation}}
\newcommand{\beqn}{\begin{equation*}}
\newcommand{\enqn}{\end{equation*}}
\newcommand{\tq}{\, | \,}
\newcommand{\ra}{\rightarrow}
\newcommand{\Ra}{\Rightarrow}
\newcommand{\hra}{\hookrightarrow}
\newcommand{\longra}{\longrightarrow}
\newcommand{\Longra}{\Longrightarrow}
\newcommand{\longhra}{\ensuremath{\lhook\joinrel\relbar\joinrel\rightarrow}}
\newcommand{\thra}{\twoheadrightarrow}
\newcommand{\mC}{\mathbb{C}}
\newcommand{\mG}{\mathbb{G}}
\newcommand{\mP}{\mathbb{P}}
\newcommand{\caA}{\mathcal{A}}
\newcommand{\caD}{\mathcal{D}}
\newcommand{\caE}{\mathcal{E}}
\newcommand{\caF}{\mathcal{F}}
\newcommand{\caG}{\mathcal{G}}
\newcommand{\caK}{\mathcal{K}}
\newcommand{\caL}{\mathcal{L}}
\newcommand{\caM}{\mathcal{M}}
\newcommand{\caN}{\mathcal{N}}
\newcommand{\caO}{\mathcal{O}}
\newcommand{\caQ}{\mathcal{Q}}
\newcommand{\caS}{\mathcal{S}}
\newcommand{\caT}{\mathcal{T}}
\newcommand{\caW}{\mathcal{W}}
\newcommand{\caX}{\mathcal{X}}
\DeclareMathOperator{\Hom}{Hom}
\DeclareMathOperator{\Ext}{Ext}
\DeclareMathOperator{\caExt}{{\mathcal Ext}}
\DeclareMathOperator{\caHom}{{\mathcal Hom}}
\DeclareMathOperator{\id}{id}
\DeclareMathOperator{\coker}{coker}
\DeclareMathOperator{\im}{im}
\DeclareMathOperator{\Pic}{Pic}
\DeclareMathOperator{\Spec}{Spec}
\DeclareMathOperator{\Supp}{Supp}
\DeclareMathOperator{\rk}{rk}
\begin{document}

\title{On deformations of curves supported on rigid divisors}

\author{V\'{\i}ctor Gonz\'{a}lez-Alonso
\footnote{Previous address: Departament de Matem\`atica Aplicada I, Universitat Polit\`ecnica de Catalunya (UPC-BarcelonaTECH), Av. Diagonal 647, 08028 Barcelona, Spain.}
\thanks{The author developed this work with the support of the Spanish ``Ministerio de Econom\'{\i}a y Competitividad'' (through the project MTM2012-38122-C03-01/FEDER) and the ``Generalitat the Catalunya'' (through the project 2009-SGR-1284). He was also partially supported by the grant FPU-AP2008-01849 of the Spanish ``Ministerio de Educaci\'{o}n'', and by the ERC StG 279723 ``Arithmetic of algebraic surfaces'' (SURFARI).}\\
\small{Institut f\"ur Algebraische Geometrie, Leibniz Universit\"at Hannover}\\
\small{Welfengarten 1, 30167 Hannover, Germany} \\
\small{gonzalez@math.uni-hannover.de}
}

\maketitle

\begin{abstract}
Motivated by a conjecture of Xiao, we study supporting divisors of fibred surfaces. On the one hand, after developing a formalism to treat one-dimensional families of varieties of any dimension, we give a structure theorem for fibred surfaces supported on relatively rigid divisors. On the other hand, we study how to produce supporting divisors by constructing a {\em global adjoint map} for a fibration over a curve (generalizing the infinitesimal constructions of Collino, Pirola, Rizzi and Zucconi).
\end{abstract}

\section{Introduction}

\label{sect-intro}

The study of fibrations of algebraic varieties, or more generally, of flat families and deformations, is quite a general problem in algebraic geometry. From the infinitesimal point of view, there is a well understood deformation theory, with the Kodaira-Spencer map playing a central role. Another useful tool to study first-order deformations are {\em supporting divisors}, which roughly speaking encode where in the variety the deformation is taking place, and can be somehow used to measure how far a given deformation is from being trivial. In the case of irregular varieties, supporting divisors are related to {\em adjoint images} (or {\em maps}), introduced by Collino and Pirola \cite{ColPir} for curves, extended later to higher dimensions by Pirola and Zucconi \cite{PZ}, and to higher-dimensional bases (but still with curves as fibres) by Pirola and Rizzi \cite{Pir-Riz}.

In this article we focus on families of varieties over smooth curves, generalizing both the notion of supporting divisor and the construction of adjoint maps to the non-infinitesimal case. Our initial motivation was the following conjecture about the relative irregularity of non-isotrivial fibrations.
\begin{conjecture}[modified Xiao's conjecture] \label{final-conj}
For any non-trivial fibration $f: S \ra B$ with fibres of genus $g$ and relative irregularity $q_f = q\left(S\right) - g\left(B\right)$, one has
\beqn
q_f \leq \frac{g}{2}+1.
\enqn
\end{conjecture}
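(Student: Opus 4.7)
The plan is to argue by contradiction, assuming $f: S \to B$ is non-isotrivial with $q_f \geq g/2 + 2$. Denote by $C$ a general fibre and by $W \subset H^0(C, \omega_C)$ the $q_f$-dimensional subspace obtained by restricting to $C$ the ``new'' global $1$-forms of $S$, i.e.~those not pulled back from $B$. A first step is to identify $W$ with $\ker(\mu_\xi)$, where $\xi \in H^1(C, T_C)$ is the Kodaira--Spencer class of $f$ (nonzero by non-isotriviality) and $\mu_\xi: H^0(C, \omega_C) \to H^1(C, \caO_C)$ is cup product with $\xi$; this identification follows from the Leray spectral sequence for $f$ applied to $\Omega^1_S$.

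Next, I would feed $W$ through the global adjoint map constructed in the paper, which generalizes Collino--Pirola and Pirola--Zucconi to the non-infinitesimal setting. For any pair $\omega_1, \omega_2 \in W$ with $\omega_1 \wedge \omega_2 \neq 0$, the adjoint class vanishes (because $W \subset \ker \mu_\xi$), and the infinitesimal adjoint theorem then produces an effective divisor $D \subset C$ such that both $\omega_1$ and $\omega_2$ belong to $H^0(C, \omega_C(-D))$. Globalizing via the machinery of one-dimensional families exhibits $D$ as an actual \emph{supporting divisor} for the entire family $f$ on a dense open subset of $B$.

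Then comes the crucial dimension count. Fixing a generic $\omega_0 \in W$ and varying $\omega \in W$, the supporting divisors $D_\omega$ associated with the pairs $(\omega_0, \omega)$ should be shown to coincide with a single relative divisor $D$ such that the whole of $W$ lies in $H^0(C, \omega_C(-D))$. Clifford's theorem applied to the special line bundle $\omega_C(-D)$ then bounds $\deg D$, and the hypothesis $q_f \geq g/2 + 2$ pushes Clifford's inequality into the equality case that classifies $D$ as rigid, namely $h^0(C, \caO_C(D)) = 1$. Passing to the relative setting, this yields a relatively rigid supporting divisor on $S$, at which point the paper's structure theorem for fibrations supported on relatively rigid divisors applies and forces $f$ to be isotrivial, contradicting the hypothesis.

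The main obstacle sits in the transition between the pointwise picture on fibres and the global picture on $S$. First, the global adjoint construction must actually produce a divisor on $S$, not just on each fibre: this requires controlling the Hodge bundle $f_* \Omega^1_{S/B}$ near the singular fibres of $f$ and verifying that the pointwise divisors $D_\omega$ fit together into a relative Cartier divisor with the correct degree. Second, and more delicate, the Clifford-type inequality has to be sharp enough to upgrade ``$|D|$ has small dimension on a general fibre'' to actual rigidity in families, where the variation of Hodge structure and the Kodaira--Spencer map interact nontrivially with the supporting divisor. Historically, this is exactly the step where naive attempts at Xiao's conjecture break down, and it is the combination of the global adjoint formalism with the structure theorem that the paper is designed to develop.
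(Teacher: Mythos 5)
The statement you are trying to prove is stated in the paper as a \emph{conjecture}: the paper offers no proof of it, and explicitly defers the general case to the forthcoming work \cite{Xiao-2}. So there is no internal proof to compare against; what the paper does provide are two ingredients (Corollary \ref{cor-adj-global} and Theorem \ref{main-thm}) that your sketch tries to chain together, and the chain as you describe it has genuine gaps. The most serious one is the claim that ``for any pair $\omega_1,\omega_2\in W$ the adjoint class vanishes because $W\subset\ker\mu_\xi$.'' Lying in $\ker\partial_\xi$ is only what makes the adjoint class \emph{defined}; its vanishing is a further, nontrivial condition. The entire content of Theorem \ref{thm-adj-inft} and Corollary \ref{cor-adj-inft} is a Chern-class computation on the Grassmannian showing that \emph{some} two-plane $W_0\subseteq V$ with vanishing adjoint exists once $\dim V>\frac{g+1}{2}$ --- not that every two-plane has this property. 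Consequently your ``crucial dimension count,'' which needs the whole $q_f$-dimensional space $W$ to lie in $H^0(C,\omega_C(-D))$ for a single divisor $D$, has no support: the Adjoint Theorem only supplies the base divisor of one pencil $\left|W_0^1\right|$ with $\dim W_0=2$, and says nothing about the remaining forms in $W$ vanishing along $D$.

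Two further steps do not work as stated. First, the rigidity hypothesis $h^0\left(F,\caO_F\left(D_{|F}\right)\right)=1$ of Theorem \ref{main-thm} is a genuine extra assumption: Corollary \ref{cor-adj-global} produces a supporting divisor satisfying $D\cdot C\le 2g\left(C\right)-2-C^2$, but gives no control on $h^0\left(F,\caO_F\left(D_{|F}\right)\right)$, and your appeal to ``the equality case of Clifford'' is backwards --- Clifford's theorem bounds $h^0$ of a special divisor from above by $\frac{\deg}{2}+1$, which is the quantity you would need to bound away from, not a mechanism that forces rigidity. Second, even when Theorem \ref{main-thm} applies, its conclusion is the existence of a second fibration $h:S\to B'$ with $g\left(B'\right)=q_f$ and a covering $S\to B\times B'$, not that $f$ is isotrivial; so the contradiction you aim for at the end does not materialize from the results of this paper. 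Closing these gaps --- in particular replacing the false ``all adjoint classes vanish'' step by an argument bounding $q_f$ directly in terms of the degree of a supporting divisor --- is precisely the additional work carried out in \cite{Xiao-2}, outside the scope of this article.
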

It is known (see the Appendix by Beauville to \cite{Deb-Beau}) that any fibration verifies $0 \leq q_f \leq g$, with $q_f = g$ if and only if it is trivial. For isotrivial but not trivial fibrations the stronger bound
\beq \label{original-ineq}
q_f \leq \frac{g+1}{2}
\enq
can be deduced from several results of Serrano in \cite{Ser-Iso}. Considering non-isotrivial fibrations, Xiao proved in \cite{Xiao-P1} the same inequality (\ref{original-ineq}) when the base curve is $B = \mP^1$, and conjectured it to hold for any base. However, a few years later, Pirola proved in \cite{Pir-Xiao} the existence of a counterexample with $q_f=3,g=4$, and proposed the above modified version. Furthermore, in the recent work \cite{Alb-Pir}, Albano and Pirola produce more counterexamples to the original conjecture with different values of $q_f$ and $g$, but still satisfying $q_f = \frac{g}{2}+1$. It is worth to note that all these examples are obtained after proving the existence of special curves in certain moduli spaces of coverings of curves, and further studying these examples is therefore a very difficult task. For example, except in one of the cases, not even $q\left(S\right)$ is known.

The general case of Conjecture \ref{final-conj} is proved by the author in a forthcoming joint work with Barja and Naranjo \cite{Xiao-2}, using some results of the present article. Although our main interest is on families of curves, most of the formalism carries over without many problems to one-dimensional families of varieties of any dimension. Since our techniques may also be useful to study deformations of higher-dimensional varieties, we have developed as many results as possible in this general setting. There are, however, specific results for families of curves that have no obvious higher-dimensional analogue.

The paper is divided into two main sections. In the first one we introduce supporting divisors in the non-infinitesimal case, giving a language to study one-dimensional families of varieties of any dimension. The main result in this section is the following theorem about fibred surfaces.
\begin{thm-main}
Let $S$ be a compact surface, and $f: S \ra B$ a stable fibration by curves of genus $g$ and relative irregularity $q_f =q\left(S\right) - g\left(B\right) \geq 2$. Suppose $f$ is supported on an effective divisor $D$ without components contained in fibres. Suppose also that $D \cdot C \leq 2g\left(C\right)-2-C^2$ for any component $C$ of a fibre, and that $h^0\left(F,\caO_F\left(D_{|F}\right)\right) = 1$ for some smooth fibre $F$. Then there is another fibration $h: S \ra B'$ over a curve of genus $g\left(B'\right) = q_f$. In particular $S$ is a covering of the product $B \times B'$, and both surfaces have the same irregularity.
\end{thm-main}
For us, a supporting divisor for a family $f: \caX \ra B$ is a divisor $\caD$ on the total space $\caX$ whose restriction to a general fibre supports the corresponding infinitesimal deformation. This means that the pull-back sequence $\xi_{\caD}$ below (where $\caL_{\caD} = \ker(\Omega_{\caX/B}^1 \ra \Omega_{\caX/B|\caD}^1)$).
\beqn
\xymatrix{
\xi_{\caD}: & 0 \ar[r] & f^*\omega_B \ar[r] \ar@{=}[d] & \caF_{\caD} \ar[r] \ar@{^(->}[]+<0mm,-3mm>;[d] & \caL_{\caD} \ar[r] \ar@{^(->}[]+<0mm,-3mm>;[d] & 0 \\
\xi: & 0 \ar[r] & f^*\omega_B \ar[r] & \Omega_{\caX}^1 \ar[r] & \Omega_{\caX/B}^1 \ar[r] & 0
}
\enqn
is split around a general fibre, but it does not necessarily split globally. Fortunately, if the fibres are curves and the divisor has low degree with respect to the components of the fibres, then the local splitting implies the global splitting (Corollary \ref{cor-converse-split}). Using this global splitting, it is just a matter of technicalities to prove Proposition \ref{prop-conjunta}, which shows that (up to change of base) we can replace the splitting subsheaf $\caL_D$ by a line bundle with much better properties. After these results, the proof of Theorem \ref{main-thm} follows easily with the help of the classical Castelnuovo-de Franchis theorem.

In the second section we consider the problem of finding supporting divisors for a given family. We first consider the infinitesimal case, introducing the {\em adjoint bundle} of a variety (Definition \ref{df-adj-bundle}) and computing its top Chern class. This gives a numerical condition on a variety which is sufficient for the existence of a subspace with vanishing adjoint image (Theorem \ref{thm-adj-inft}). Then we construct, considering only curves as fibres, a {\em global adjoint map}, which glues the adjoint maps on the smooth fibres in a coherent way (covering also the singular fibres). Putting together this construction and the numerical condition mentioned above, we obtain Theorem \ref{pro-adj-global} and its following
\begin{cor-main}
If $f: S \ra B$ is a fibration of genus $g$ such that $q_f > \frac{g+1}{2}$, then after a base change $B' \ra B$ the new fibration $f': S' \ra B'$ is supported on a divisor $D$ such that $D \cdot F < 2g-2$ for any fibre $F$. Furthermore, if $f$ is relatively minimal with reduced fibres, then $D \cdot C \leq 2g\left(C\right)-2-C^2$ for any component $C$ of a fibre.
\end{cor-main}
That is, if the relative irregularity of the fibration is too big (in particular, it does not verify Xiao's conjecture), then there is a divisor satisfying the first hypothesis in Theorem \ref{main-thm} (up to change of base).
In this way, both sections of the paper get connected, and its relation with Xiao's conjecture is evident.


\section{One-dimensional families of varieties}

\label{sect-deform}

We will first recall a basic definition which is the starting point of the whole work. Let $X$ be a smooth compact complex variety of dimension $d$, and $\xi \in H^1\left(X,T_X\right)$ the Kodaira-Spencer class of a first-order infinitesimal deformation $X$, given by the extension of vector bundles
\beqn
\xi: \quad 0 \longra \caO_X \longra \caF \longra \Omega_X^1 \longra 0.
\enqn

\begin{definition} \label{df-deform-infinit}
The deformation $\xi$ is said to be {\em supported on} an effective divisor $D$ of $X$ if
\beqn
\xi \in K_D = \ker\left(H^1\left(X,T_X\right) \ra H^1\left(X,T_X\left(D\right)\right)\right) = \im\left(H^0\left(D,T_X\left(D\right)_{|D}\right) \ra H^1\left(X,T_X\right)\right),
\enqn
or equivalently, if the pull-back sequence in the following diagram splits.
\begin{equation} \label{pull-back-fibre}
\xymatrix{
\xi_D: & 0 \ar[r] & \caO_X \ar[r] \ar@{=}[d] & \caF_D \ar[r] \ar@{^(->}[]+<0mm,-3mm>;[d] & \Omega_X^1\left(-D\right) \ar[r] \ar@{^(->}[]+<0mm,-3mm>;[d] & 0 \\
\xi: & 0 \ar[r] & \caO_X \ar[r] & \caF \ar[r] & \Omega_X^1 \ar[r] & 0
}
\end{equation}
\end{definition}

Intuitively, one can think of a deformation supported on $D$ as an infinitesimal deformation of the complex structure of $X$ that does not change $X-D$.

\begin{example}[Schiffer variations]
In the case $X=C$ is a curve and $\xi$ is supported on a single point $p$, $\xi$ is said to be a {\em Schiffer variation}. They were introduced by Spencer and Schiffer in \cite{SpeSchi}, and they proved very useful to study the Griffiths infinitesimal invariant of a deformation (see \cite{Gri-IVHS3}).
\end{example}

\begin{remark}
Supporting divisors always exist. In fact, for $D$ ample enough it holds $H^1\left(X,T_X\left(D\right)\right)=0$, and hence $D$ supports any infinitesimal deformation. Therefore the interesting supporting divisors are those such that $H^1\left(X,T_X\left(D\right)\right) \neq 0$, whose existence is not clear at first sight. If $X$ is irregular, the Adjoint Theorem (Theorem \ref{thm-adj}, to be introduced in the next section) provides a construction of such supporting divisors as base divisors of certain subsystems of the canonical system of $X$.
\end{remark}

\begin{remark}
Since ample enough divisors support any deformation, one cannot expect any meaningful relation between any supporting divisor and the geometry of $X$. However, if $X$ has maximal Albanese dimension and a supporting divisor $D$ is obtained by means of the Adjoint Theorem, then it follows from \cite{PZ} that $D$ contains the ramification divisor of the Albanese map of $X$.
\end{remark}

The aim of this section is to develop and study an analogous construction in the case of a smooth variety of dimension $d+1$ fibred over a curve (that is, a one-dimensional family of $d$-dimensional varieties), extending them also to possibly singular fibres. Some of the ideas used here also appear in \cite{Sernesi}.

\subsection{General setting}

\label{subsect-deform-general}

From now on, let $f: \caX \ra B$ be a proper morphism of smooth complex varieties, where $B$ is a smooth curve (not necessarily compact), and $\caX$ has dimension $d+1$. It can be considered as a family of $d$-dimensional complex spaces $X_b = \caX \times_B \Spec\mC\left(b\right)$, $b \in B$. Let $B^o \subseteq B$ be the open set of regular values, so that $X_b$ is smooth if and only if $b \in B^o$. Denote also by $\caX^o = f^{-1}\left(B^o\right)$ the union of the smooth fibres. We will assume that $f$ is {\em non-isotrivial}.

For every smooth fibre $X=X_b$, $f$ induces an infinitesimal deformation, whose Kodaira-Spencer class $\xi_b \in H^1\left(X,T_X\right) \otimes T_{B,b}^{\vee} \cong \Ext_{\caO_X}^1\left(\Omega_X^1,\caO_X \otimes T_{B,b}^{\vee}\right)$ is the extension class of
\beqn
0 \longra N_{X/\caX}^{\vee} = \caO_X \otimes T_{B,b}^{\vee} \longra \Omega_{\caX|X}^1 \longra \Omega_X^1 \longra 0,
\enqn
obtained by restricting the sequence
\beq \label{eq-ext-global}
\xi: \quad 0 \longra f^* \omega_B \longra \Omega_{\caX}^1 \longra \Omega_{\caX/B}^1 \longra 0
\enq
defining the {\em sheaf of relative differentials} $\Omega_{\caX/B}^1$. Indeed, $\Omega_{\caX/B}^1$ is locally free of rank $d$ on $\caX^o$, and restricts to the cotangent bundle of the smooth fibres. Its dual is
\beqn
T_{\caX/B} = \caHom_{\caO_{\caX}}\left(\Omega_{\caX/B}^1,\caO_{\caX}\right) = \ker\left(T_{\caX} \longra f^*T_B\right),
\enqn
the {\em relative tangent sheaf}. It is also locally free of rank $d$ on $\caX^o$, and restricts to the tangent bundle of the smooth fibres. We will also consider $\omega_{\caX/B} = \omega_{\caX} \otimes f^*\omega_B^{\vee}$, the relative dualizing sheaf, a line bundle on $\caX$ that restricts to the canonical bundle of the fibres.

\begin{remark}
Usually, one chooses a generator of $T_{B,b}$ and considers $\xi_b \in H^1\left(X,T_X\right)$. However, this might not be done globally on $B$, so we will keep the natural twist by $T_{B,b}^{\vee}$.
\end{remark}

Since the fibration $f$ is not isotrivial, $\xi_b \neq 0$ for general $b \in B^o$. Furthermore, if $\caD$ is any effective divisor on $\caX$, we can also ask whether $\xi_b$ is supported on $D_b = \caD_{|X_b}$, and if the answer is positive, what consequences for the fibration $f$ does it have.

We aim now for a globalization over $B$ of the vector space $H^1\left(X,T_X\right)$. The first candidate one can think of is the sheaf $R^1f_*T_{\caX/B}$, but it is difficult to track its behaviour at the critical values. Instead, we consider the forthcoming sheaf $\caE$, which is closely related to $R^1f_*T_{\caX/B}$ (see Lemma \ref{lem-inj-R1TE}) but behaves much better around the non-smooth fibres.

\begin{definition}
Let $\caE$ be the sheaf on $B$ defined as (see the Appendix for the definition and main properties of the relative $\caExt$ sheaves)
\beqn
\caE = \caExt_f^1\left(\Omega_{\caX/B}^1,f^*\omega_B\right) \cong \caExt_f^1\left(\Omega_{\caX/B}^1,\caO_{\caX}\right)\otimes\omega_B.
\enqn
\end{definition}

\begin{lemma} \label{lem-inj-R1TE}
There is an injection
\beqn
R^1f_*T_{\caX/B} \otimes \omega_B \longhra \caE
\enqn
which is an isomorphism over $B^o$. In particular, for a general regular value $b \in B^o$ it holds
\beqn
\caE \otimes \mC\left(b\right) \cong H^1\left(X_b,T_{X_b}\right) \otimes T_{B,b}^{\vee}.
\enqn
\end{lemma}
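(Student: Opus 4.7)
The plan is to exploit the local-to-global spectral sequence for relative $\caExt$-sheaves, which (assuming the properties listed in the Appendix) takes the form
\beqn
E_2^{p,q} = R^pf_*\caExt^q\!\left(\Omega_{\caX/B}^1, f^*\omega_B\right) \Longra \caExt_f^{p+q}\!\left(\Omega_{\caX/B}^1, f^*\omega_B\right).
\enqn
Extracting the five-term exact sequence in low degrees gives
\beqn
0 \longra R^1f_*\caHom\!\left(\Omega_{\caX/B}^1, f^*\omega_B\right) \longra \caE \longra f_*\caExt^1\!\left(\Omega_{\caX/B}^1, f^*\omega_B\right) \longra \cdots,
\enqn
which will be the source of the claimed injection.

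First I would identify the left-hand term. Since $f^*\omega_B$ is a line bundle pulled back from $B$, one has
\beqn
\caHom\!\left(\Omega_{\caX/B}^1, f^*\omega_B\right) \cong \caHom\!\left(\Omega_{\caX/B}^1, \caO_{\caX}\right) \otimes f^*\omega_B = T_{\caX/B} \otimes f^*\omega_B,
\enqn
and the projection formula then yields $R^1f_*\!\left(T_{\caX/B}\otimes f^*\omega_B\right) \cong R^1f_*T_{\caX/B} \otimes \omega_B$. Plugging this into the five-term sequence provides the required injection $R^1f_*T_{\caX/B} \otimes \omega_B \hookrightarrow \caE$.

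Next I would check that this injection is an isomorphism over $B^o$. Over $B^o$ the morphism $f$ is smooth, so $\Omega_{\caX/B}^1$ is locally free and $\caExt^q\!\left(\Omega_{\caX/B}^1, f^*\omega_B\right) = 0$ for all $q \geq 1$ on $\caX^o$. Consequently the $E_2^{0,1}$ term of the spectral sequence vanishes after restricting to $B^o$, and the five-term exact sequence collapses to the desired isomorphism on $B^o$.

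Finally, for the fibre computation at a general $b \in B^o$, I would combine the isomorphism above with Grauert's base-change theorem applied to the smooth proper family $f: \caX^o \to B^o$: the function $b \mapsto \dim H^1(X_b, T_{X_b})$ is upper semicontinuous, hence constant on a non-empty open subset of $B^o$, and there $R^1f_*T_{\caX/B}$ commutes with base change, giving $(R^1f_*T_{\caX/B}) \otimes \mC(b) \cong H^1(X_b,T_{X_b})$. Since $\omega_B \otimes \mC(b) \cong T_{B,b}^{\vee}$, tensoring with $\omega_B$ yields the stated formula for $\caE \otimes \mC(b)$. The main technical point—and only real obstacle—is justifying the existence and naturality of the local-to-global spectral sequence for relative $\caExt$ in a sufficiently general setting to allow for the singular fibres of $f$; this is precisely what the appendix of the paper is set up to handle, so in the body of the proof it can be invoked as a black box.
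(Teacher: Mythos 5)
Your proposal is correct and follows essentially the same route as the paper's own proof: the five-term exact sequence of the local--global spectral sequence for relative $\caExt$, the identification of the $E_2^{1,0}$ term via the projection formula, the vanishing of $f_*\caExt^1_{\caO_{\caX}}\left(\Omega_{\caX/B}^1,f^*\omega_B\right)$ over $B^o$ because $\Omega_{\caX/B}^1$ is locally free there, and cohomology-and-base-change for the fibrewise statement. The only cosmetic difference is that you cite Grauert/semicontinuity by name where the paper simply restricts to the open set where $b \mapsto h^1\left(X_b,T_{X_b}\right)$ is constant.
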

\begin{proof}
The injection is obtained directly from the beginning of the five-term exact sequence corresponding to the local-global spectral sequence (Theorem \ref{thm-ext}.4)
\beqn
R^pf_*\caExt_{\caO_{\caX}}^q\left(\Omega_{\caX/B}^1,f^*\omega_B\right) \Longra \caExt_f^{p+q}\left(\Omega_{\caX/B}^1,f^*\omega_B\right),
\enqn
combined with the projection formula. The cokernel is $f_*\caExt^1_{\caO_{\caX}}\left(\Omega_{\caX/B}^1,f^*\omega_B\right)$, which is zero on $B^o$ because $\Omega_{\caX/B}^1$ is locally free on $\caX^o$, and the first claim follows.

As for the statement about the general regular values, note that $T_{\caX/B|X_b} = T_{X_b}$ for any smooth fibre, and the base-change map $\caE \otimes \mC(b) \ra H^1\left(X_b,T_{X_b}\right) \otimes T_{B,b}^{\vee}$ is an isomorphism over the open set of $B$ where the function $b \mapsto h^1\left(X_b,T_{X_b}\right)$ is constant.
\end{proof}

The family $f$ naturally defines a section of $\caE$ as follows: the extension class of (\ref{eq-ext-global}) defines an element $\xi \in \Ext^1_{\caO_{\caX}}\left(\Omega_{\caX/B}^1,f^*\omega_B\right)$, and the spectral sequence
\beq \label{eq-ss-1}
E_2^{p,q} = H^p\left(B,\caExt_f^q\left(\Omega_{\caX/B}^1,f^*\omega_B\right)\right) \Longra \Ext_{\caO_{\caX}}^{p+q}\left(\Omega_{\caX/B}^1,f^*\omega_B\right)
\enq
gives a map
\beq \label{map-rho}
\rho: \Ext_{\caO_{\caX}}^1\left(\Omega_{\caX/B}^1,f^*\omega_B\right) \longra H^0\left(B,\caExt_f^1\left(\Omega_{\caX/B}^1,f^*\omega_B\right)\right) = H^0\left(B,\caE\right).
\enq
By construction, the section $\rho\left(\xi\right)$ of $\caE$ maps a general value $b \in B^o$ to the Kodaira-Spencer class $\xi_b$ of the deformation of $X_b$.

We move now to the definition of supporting divisors of the family $f$.

\begin{definition} \label{df-global-supp}
We say that the the family $f$ {\em is supported on} an effective divisor $\caD \subset \caX$ if, for general $b \in B$, the infinitesimal deformation $\xi_b$ is supported in $\caD_{|X_b}$.
\end{definition}

\begin{remark} \label{rmk-same-support}
Since Definition \ref{df-global-supp} takes into account only general fibers, it follows that if $\caD,\caD' \subset \caX$ are two effective divisors with exactly the same components dominating $B$, then $\xi$ is supported on $\caD$ if and only if it is supported on $\caD'$. Therefore we will often assume that a supporting divisor $\caD$ has no components contracted by $f$.
\end{remark}

Since the definition is local around general fibres, the next Lemma follows immediately.

\begin{lemma} \label{lem-supp-base-change}
Let $p: B' \ra B$ be a finite morphism and $\caX'$ a desingularization of $\caX \times_B B'$. Denote by $f': \caX' \ra B'$ and $p': \caX' \ra \caX$ the two natural projections. Then $f$ is supported on a divisor $\caD$ if and only if $f'$ is supported on $\caD' = \left(p'\right)^* \caD$.
\end{lemma}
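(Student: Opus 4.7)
The plan is to deduce the equivalence directly from Definition \ref{df-global-supp} by comparing the two families at a general point of the base. Both conditions ``$f$ is supported on $\caD$'' and ``$f'$ is supported on $\caD'$'' are fibrewise conditions at general base points, so it suffices to fix a general $b' \in B'$, set $b = p(b')$, and verify that $\xi'_{b'}$ is supported on $\caD'_{|X'_{b'}}$ if and only if $\xi_b$ is supported on $\caD_{|X_b}$.

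For a general $b'$ we may assume that $b' \in (B')^o$, $b \in B^o$, and that $p$ is étale at $b'$ (all these conditions exclude only finitely many or a proper closed subset of points of $B'$). Over a small enough neighbourhood of $b'$, the base-changed morphism $\caX \times_B B' \ra \caX$ is then étale, so $\caX \times_B B'$ is already smooth there and the desingularization map $\caX' \ra \caX \times_B B'$ is a local isomorphism around $X'_{b'}$. Consequently, $p': \caX' \ra \caX$ is étale on an open set containing $X'_{b'}$, and its restriction $p'_{|X'_{b'}}: X'_{b'} \ra X_b$ is an isomorphism identifying $\caD'_{|X'_{b'}}$ with $\caD_{|X_b}$.

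In such an étale neighbourhood, the canonical isomorphism $(p')^*\Omega^1_{\caX/B} \cong \Omega^1_{\caX'/B'}$ shows that the pullback by $p'$ of the defining sequence (\ref{eq-ext-global}) is the analogous sequence defining $\xi'$; combined with the isomorphism $dp: T_{B',b'} \cong T_{B,b}$, this gives that $\xi'_{b'}$ corresponds to $\xi_b$ under the identifications of fibres and tangent spaces. Hence the pull-back diagram of Definition \ref{df-deform-infinit} (suitably twisted) attached to $\caD'_{|X'_{b'}}$ is the pullback under $p'_{|X'_{b'}}$ of the one attached to $\caD_{|X_b}$, so one splits if and only if the other does; equivalently, $\xi_b$ lies in $K_{\caD_{|X_b}}$ if and only if $\xi'_{b'}$ lies in $K_{\caD'_{|X'_{b'}}}$, which is the claimed equivalence.

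The only conceptually delicate point is that $\caX'$ is a desingularization rather than the actual fibre product, so one has to be careful that the diagram still behaves well at the generic fibre; but this is resolved immediately by the observation above that the desingularization is a local isomorphism around a general fibre $X'_{b'}$. The rest is the standard étale base-change compatibility of the relative cotangent sequence and of Kodaira--Spencer classes.
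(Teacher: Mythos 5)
Your proof is correct and follows exactly the route the paper intends: the paper gives no argument beyond remarking that the lemma ``follows immediately'' because Definition \ref{df-global-supp} only concerns general fibres, and your write-up is precisely the standard elaboration of that remark (étaleness of $p$ at a general $b'$, local isomorphism of the desingularization near a general fibre, and compatibility of the Kodaira--Spencer classes and supporting conditions under this identification).
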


Our next purpose is to relate Definition \ref{df-global-supp} to the splitting of a certain pull-back of
\beqn
\xi: \quad 0 \longra f^*\omega_B \longra \Omega_{\caX}^1 \longra \Omega_{\caX/B}^1 \longra 0,
\enqn
in the spirit of the equivalence in Definition \ref{df-deform-infinit}. To this aim we introduce the following sheaves, which play the role of $\Omega_X^1\left(-D\right)$ and $H^1\left(X,T_X\left(D\right)\right)$ in the infinitesimal setting.

\begin{definition} \label{df-aux-supp}
For any effective divisor $\caD \subset \caX$, considered as a closed subscheme, define
\beqn
\caL_{\caD} = \ker\left(\Omega_{\caX/B}^1 \ra \Omega_{\caX/B|\caD}^1\right) \quad \text{and} \quad \caE_{\caD} = \caExt_f^1\left(\caL_{\caD},f^*\omega_B\right) = \caExt_f^1\left(\caL_{\caD},\caO_{\caX}\right) \otimes \omega_B.
\enqn
\end{definition}

The proof of the following Lemma is straightforward (in fact, the proof of the first statement is completely analogous to that of Lemma \ref{lem-inj-R1TE}).

\begin{lemma} \label{lem-inj-R1TDE}
There is a natural inclusion
\beqn
R^1f_*\left(T_{\caX/B}\left(\caD\right)\right) \otimes \omega_B \longhra \caE_{\caD}
\enqn
which is an isomorphism over $B^o$. Furthermore, the inclusion $\iota: \caL_{\caD} \hra \Omega_{\caX/B}^1$ induces a natural map of sheaves $\caE \ra \caE_{\caD}$ which over a general regular value $b \in B^o$ coincides with
$$H^1\left(X_b,T_{X_b}\right) \otimes T_{B,b}^{\vee} \ra H^1\left(X_b,T_{X_b}\left(D_b\right)\right) \otimes T_{B,b}^{\vee}.$$
\end{lemma}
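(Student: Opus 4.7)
The plan is to follow, essentially verbatim, the strategy used in the proof of Lemma \ref{lem-inj-R1TE}, replacing the sheaf $\Omega_{\caX/B}^1$ by $\caL_{\caD}$. The starting point is the local-to-global spectral sequence (Theorem \ref{thm-ext}.4)
$$E_2^{p,q} = R^pf_*\caExt^q_{\caO_{\caX}}\!\left(\caL_{\caD},f^*\omega_B\right) \Longra \caExt_f^{p+q}\!\left(\caL_{\caD},f^*\omega_B\right) = \caE_{\caD},$$
whose five-term exact sequence begins
$$0 \to R^1f_*\caHom\!\left(\caL_{\caD},f^*\omega_B\right) \to \caE_{\caD} \to f_*\caExt^1\!\left(\caL_{\caD},f^*\omega_B\right).$$

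Next I would analyse the situation over the smooth locus. Since $\caX$ is smooth, $\caD$ is Cartier, and since $\Omega_{\caX/B}^1$ is locally free on $\caX^o$, the canonical map $\Omega_{\caX/B}^1 \otimes \caI_{\caD} \to \Omega_{\caX/B}^1$ factors through $\caL_{\caD}$ and yields an isomorphism $\Omega_{\caX/B}^1(-\caD)\xrightarrow{\sim}\caL_{\caD}$ over $\caX^o$. In particular $\caL_{\caD}$ is locally free there, so $\caExt^1(\caL_{\caD},f^*\omega_B)$ vanishes on $\caX^o$ (making the cokernel term above zero over $B^o$), and dualizing gives $\caHom(\caL_{\caD},f^*\omega_B)|_{\caX^o} \cong T_{\caX/B}(\caD)\otimes f^*\omega_B|_{\caX^o}$. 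Combined with the projection formula, this identifies the source of the edge map with $R^1f_*(T_{\caX/B}(\caD))\otimes\omega_B$ over $B^o$, giving the isomorphism claimed there.

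To make the inclusion natural on the whole of $B$, I would construct the map globally as follows: a homomorphism $\phi : \Omega_{\caX/B}^1 \to f^*\omega_B(\caD)$ sends $\caL_{\caD}$ into $f^*\omega_B$, because the image of $\caL_{\caD}$ in $\Omega_{\caX/B|\caD}^1$ is zero and the kernel of the restriction $f^*\omega_B(\caD) \to f^*\omega_B(\caD)|_{\caD}$ is precisely $f^*\omega_B$ (using that $f^*\omega_B$ is a line bundle, hence torsion-free). This yields a canonical morphism $T_{\caX/B}(\caD)\otimes f^*\omega_B \to \caHom(\caL_{\caD},f^*\omega_B)$ which is an isomorphism on $\caX^o$; applying $R^1f_*$ and composing with the edge injection of the spectral sequence gives the desired global inclusion $R^1f_*(T_{\caX/B}(\caD))\otimes\omega_B \hookrightarrow \caE_{\caD}$, an isomorphism on $B^o$.

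For the second assertion, the map $\caE\to\caE_{\caD}$ is obtained by contravariant functoriality of $\caExt_f^1(-,f^*\omega_B)$ applied to the inclusion $\iota:\caL_{\caD}\hookrightarrow\Omega_{\caX/B}^1$, and it is compatible with the edge maps of the two local-to-global spectral sequences. Over $B^o$ the previous identifications reduce it to the $R^1f_*$ of the natural inclusion $T_{\caX/B}\hookrightarrow T_{\caX/B}(\caD)$; taking fibres at a general $b\in B^o$ via the usual base-change argument (valid where $h^1(X_b,T_{X_b})$ and $h^1(X_b,T_{X_b}(D_b))$ are locally constant) identifies this with the map $H^1(X_b,T_{X_b})\otimes T_{B,b}^{\vee}\to H^1(X_b,T_{X_b}(D_b))\otimes T_{B,b}^{\vee}$ induced by $T_{X_b}\hookrightarrow T_{X_b}(D_b)$. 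The only subtle step is the global construction in the first part, since $\caL_{\caD}$ and $\Omega_{\caX/B}^1(-\caD)$ need not agree away from $\caX^o$; everything else is formal from the spectral sequence and base change.
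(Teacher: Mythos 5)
Your strategy is exactly the one the paper has in mind --- the paper gives no proof beyond declaring the first statement ``completely analogous'' to Lemma \ref{lem-inj-R1TE}, and your edge-map/projection-formula/base-change argument is precisely that analogy, carried out with the correct extra care about the difference between $\caL_{\caD}$ and $\Omega^1_{\caX/B}\left(-\caD\right)$. The functoriality argument for the map $\caE \ra \caE_{\caD}$ and its identification on general fibres is also fine.

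There is, however, one step that does not follow as written. You establish that the natural map $T_{\caX/B}\left(\caD\right)\otimes f^*\omega_B \ra \caHom_{\caO_{\caX}}\left(\caL_{\caD},f^*\omega_B\right)$ is an isomorphism over $\caX^o$, and then conclude that applying $R^1f_*$ and composing with the edge injection yields a \emph{global} inclusion. But $R^1f_*$ does not preserve injectivity: if the sheaf map were an isomorphism only over $\caX^o$, its image under $R^1f_*$ could acquire a kernel supported on $B\setminus B^o$ (coming from $f_*$ of the cokernel via the connecting map), so you would only get an inclusion over $B^o$, which is weaker than the statement. The missing observation is that your comparison map is in fact an isomorphism on all of $\caX$: the kernel of the surjection $\Omega^1_{\caX/B}\left(-\caD\right)\thra\caL_{\caD}$ is a torsion sheaf supported where $\Omega^1_{\caX/B}$ fails to be locally free, and since $f^*\omega_B$ is a line bundle every homomorphism from that kernel into $f^*\omega_B$ vanishes; hence $\caHom_{\caO_{\caX}}\left(\caL_{\caD},f^*\omega_B\right)\cong\caHom_{\caO_{\caX}}\left(\Omega^1_{\caX/B}\left(-\caD\right),f^*\omega_B\right)\cong T_{\caX/B}\left(\caD\right)\otimes f^*\omega_B$ globally, and one checks this identification is inverse to your restriction map. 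With this in hand the global injectivity is literally the edge map of the spectral sequence, exactly as in Lemma \ref{lem-inj-R1TE}, and the rest of your proof stands.
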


By definition, $f$ is supported on $\caD$ if the pull-back sequence
\beq \label{pull-back-seq}
\xymatrix{
\xi_{\caD}: & 0 \ar[r] & f^*\omega_B \ar[r] \ar@{=}[d] & \caF_{\caD} \ar[r] \ar@{^(->}[]+<0mm,-3mm>;[d] & \caL_{\caD} \ar[r] \ar@{^(->}[]+<0mm,-3mm>;[d] & 0 \\
\xi: & 0 \ar[r] & f^*\omega_B \ar[r] & \Omega_{\caX}^1 \ar[r] & \Omega_{\caX/B}^1 \ar[r] & 0
}
\enq
splits around a general fibre. Of course, this does not imply in general that the pull-back sequence is itself split. In order to find conditions under which the pull-back sequence (\ref{pull-back-seq}) splits, consider the following commutative diagram
\beqn
\xymatrix{
\Ext_{\caO_{\caX}}^1\left(\Omega_{\caX/B}^1,f^*\omega_B\right) \ar[r]^-{\iota^*} \ar[d]_{\rho} & \Ext_{\caO_{\caX}}^1\left(\caL_{\caD},f^*\omega_B\right) \ar[d]^{\rho_{\caD}} \\
H^0\left(B,\caE\right) \ar[r]^-{\widetilde{\iota^*}} & H^0\left(B,\caE_{\caD}\right)
}
\enqn
where the vertical maps are given by the corresponding local-global spectral sequences, and the horizontal ones are induced by the inclusions of sheaves mentioned in Lemma \ref{lem-inj-R1TDE}.

On the one hand, at a general regular value $b \in B^o$ we have $\widetilde{\iota^*}\left(\rho\left(\xi\right)\right)\left(b\right) = \xi_{b,D_b}$. Hence it is clear that $f$ is supported on $\caD$ if and only if
\beqn
\widetilde{\xi_{\caD}} := \widetilde{\iota^*}(\rho(\xi)) = \rho_{\caD}(\iota^*(\xi)) \in H^0\left(B,\caE_{\caD}\right)
\enqn
is a torsion section of $\caE_{\caD}$.

On the other hand, the global splitting of (\ref{pull-back-seq}) is equivalent to the vanishing of
\beqn
\xi_{\caD} = \iota^*\left(\xi\right) \in \Ext_{\caO_{\caX}}^1\left(\caL_{\caD},f^*\omega_B\right).
\enqn

Therefore, to deduce the vanishing of $\xi_{\caD}$ from the fact that $\widetilde{\xi_{\caD}} = \rho_{\caD}(\xi_{\caD})$ is torsion, there are two things we must consider:
\begin{itemize}
\item the torsion of $\caE_{\caD}$ (ideally, we want it to be torsion-free), and
\item the kernel of $\rho_{\caD}$, which by the local-global spectral sequence is
\beqn
\ker \rho_{\caD} = H^1\left(B,f_*\caHom_{\caO_{\caX}}\left(\caL_{\caD},f^*\omega_B\right)\right).
\enqn
\end{itemize}
Dealing with these questions in arbitrary dimensions is quite complicated, but they become easier if we restrict ourselves to families of curves, as we will do now.

\subsection{The special case of fibred surfaces}

\label{subsect-deform-surf}

From now until the end of the section, we will asume that $\caX = S$ is a surface. We will denote the fibres by $F_b$, and $g$ will stand for the genus of the general (smooth) fibres.

In this particular case it is not difficult to control the kernel of $\rho_D$ and the torsion-freeness of $\caE_D$ for a supporting divisor $D$. The former is in fact rather immediate, and for the latter we will need to assume that the fibration is stable (i.e, relatively minimal with reduced and simple normal crossing singular fibres). It is also very useful to control the relation between the sheaves $\Omega_{S/B}^1$ and $\omega_{S/B}$, which one can expect to be very similar since they coincide on the smooth fibres. More precisely, there is a natural map $\alpha: \Omega_{S/B}^1 \ra \omega_{S/B}$ which is not available in the general setting above of higher-dimensional fibres.

We will first focus on the kernel of $\rho_D$.

\begin{lemma} \label{lem-rho-iso}
Let $D \subseteq S$ be an effective divisor such that $D \cdot F_b < 2g-2$. Then $f_*\caL_D^{\vee}=0$, and therefore the map
\beqn
\rho_D: \Ext_{\caO_S}^1\left(\caL_D,f^*\omega_B\right) \longra H^0\left(B,\caE_D\right)
\enqn
is an isomorphism.
\end{lemma}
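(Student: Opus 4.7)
The plan is to prove the two assertions in sequence: first the vanishing $f_*\caL_D^{\vee}=0$, from which the isomorphism of $\rho_D$ will follow essentially for free via the local-global spectral sequence for $\caExt_f$. The main obstacle is the first assertion, since $\caL_D$ is a subsheaf of the possibly torsion-bearing sheaf $\Omega_{S/B}^1$ (which can have torsion at nodes of singular fibres), and I need to control its dual globally on $S$.

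For the vanishing, my strategy is to embed $\caL_D^{\vee}$ into a locally free sheaf on $S$ and then transport torsion-freeness down to $B$. By definition, $\caL_D$ coincides with the image of the natural multiplication map $\Omega_{S/B}^1(-D)\to \Omega_{S/B}^1$, so there is a surjection $\Omega_{S/B}^1(-D)\twoheadrightarrow \caL_D$. Dualizing into $\caO_S$ yields an injection
\[
\caL_D^{\vee}\hookrightarrow \caHom(\Omega_{S/B}^1(-D),\caO_S)\cong T_{S/B}(D),
\]
and composing with the inclusion $T_{S/B}\hookrightarrow T_S$ (obtained by dualizing $\xi$) lands $\caL_D^{\vee}$ inside the locally free sheaf $T_S(D)$. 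Since $f$ is flat (smooth total space and base, equidimensional fibres), $T_S(D)$ is $\caO_B$-flat, hence $f_*T_S(D)$ is torsion-free on the smooth curve $B$; by left-exactness of $f_*$ the same holds for $f_*\caL_D^{\vee}$.

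To conclude the first claim I compute the generic rank of $f_*\caL_D^{\vee}$. Over a general $b\in B^o$ the fibre $F_b$ is smooth and avoids the finitely many critical points of $f$, so $\caL_D|_{F_b}\cong \Omega_{F_b}^1(-D|_{F_b})$ and hence $\caL_D^{\vee}|_{F_b}\cong T_{F_b}(D|_{F_b})$, a line bundle of degree $D\cdot F_b-(2g-2)<0$ by hypothesis. Thus $h^0(F_b,\caL_D^{\vee}|_{F_b})=0$ for general $b$, which by semi-continuity forces the generic stalk of $f_*\caL_D^{\vee}$ to vanish. A torsion-free sheaf with zero generic stalk on a smooth curve is zero, so $f_*\caL_D^{\vee}=0$.

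Finally, the local-global spectral sequence (Theorem \ref{thm-ext}.4) gives the five-term exact sequence
\[
0\to H^1(B,f_*\caHom(\caL_D,f^*\omega_B))\to \Ext^1_{\caO_S}(\caL_D,f^*\omega_B)\xrightarrow{\rho_D} H^0(B,\caE_D)\to H^2(B,f_*\caHom(\caL_D,f^*\omega_B)).
\]
The rightmost term vanishes because $\dim B=1$, and by the projection formula $f_*\caHom(\caL_D,f^*\omega_B)\cong (f_*\caL_D^{\vee})\otimes\omega_B=0$, so the leftmost term vanishes as well. Hence $\rho_D$ is an isomorphism.
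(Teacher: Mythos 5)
Your proof is correct and follows essentially the same route as the paper: show that $f_*\caL_D^{\vee}$ is a torsion-free sheaf on $B$ whose generic fibre is $H^0\left(F_b,T_{F_b}\left(D_b\right)\right)=0$ by the degree hypothesis, hence vanishes, and then read off $\ker\rho_D$ and $\coker\rho_D$ from the five-term exact sequence of the local-global spectral sequence. The only (harmless) difference is how torsion-freeness of $f_*\caL_D^{\vee}$ is obtained: you embed $\caL_D^{\vee}$ into the locally free sheaf $T_S\left(D\right)$ via the surjection $\Omega_{S/B}^1\left(-D\right)\thra\caL_D$, whereas the paper simply observes that a dual sheaf is automatically torsion-free and that $f_*$ preserves torsion-freeness.
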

\begin{proof}
The dual $\caL_D^{\vee}$ is torsion-free, so $f_*\caL_D^{\vee}$ is also torsion-free, hence a vector bundle. Therefore for a general $b \in B$ we have
\beqn
\left(f_*\caL_D^{\vee}\right) \otimes \mC(b) = H^0\left(F_b,\caL_{D|F_b}^{\vee}\right) = H^0\left(F_b,T_{F_b}(D_b)\right) = 0,
\enqn
since $\caL_{D|F_b} = \ker\left(\omega_{F_b} \ra \omega_{F_b|D_b}\right) = \omega_{F_b}(-D_b)$ and the hypothesis $D \cdot F_b < 2g-2$ is equivalent to $\deg\left(T_{F_b}(D_b)\right)<0$. Now, the five-term exact sequence associated to the spectral sequence
\beqn
H^p\left(B,\caExt_f^q\left(\caL_D,f^*\omega_B\right)\right) \Longra \Ext^{p+q}_{\caO_S}\left(\caL_D,f^*\omega_B\right)
\enqn
gives
\beqn
\ker \rho_D = H^1\left(B,f_*\caHom_{\caO_S}\left(\caL_D,f^*\omega_B\right)\right) = H^1\left(f_*\caL_D^{\vee} \otimes \omega_B\right) = 0
\enqn
because $f_*\caL_D^{\vee}=0$, and since $\dim B =1$ we also have
\beqn
\coker \rho_D \subseteq H^2\left(B,f_*\caHom_{\caO_S}\left(\caL_D,f^*\omega_B\right)\right) = 0.
\enqn
\end{proof}

We study now the relation between $\Omega_{S/B}^1$ and $\omega_{S/B}$, which is crucial to prove the torsion-freeness of $\caE_D$ (Proposition \ref{prop-tors-free}), as well as Proposition \ref{prop-conjunta} and therefore Theorem \ref{main-thm}.

\begin{definition}
The {\em Jacobian ideal sheaf} of $f$ is
\beqn
J:= \im\left(T_f: T_S \longra f^*T_B\right) \otimes f^*\omega_B \subseteq \caO_S.
\enqn
It is the ideal of a subscheme $Z$ supported on the critical points of $f$. Denote by $Z_d$ the union of the divisorial components of $Z$, and by $Z_p$ the residual subscheme supported on points.
\end{definition}

\begin{remark}
In \cite{SerDuk}, Serrano defined a sheaf (also denoted by $J$) which is essentially our Jacobian ideal sheaf, but without the twisting by $f^*\omega_B$.
\end{remark}

\begin{lemma}[\cite{SerDuk} Lemma 1.1]
Let $\left\{E_i\right\}$ be the irreducible components of the singular fibers of $f$, and let $\nu_i$ be the multiplicity of $E_i$ as a component of the corresponding fibre. Then:
\begin{enumerate}
\item The relative tangent sheaf $T_{S/B}$ is an invertible sheaf, whose inverse is
\beqn
\left(\Omega_{S/B}^1\right)^{\vee\vee} = T_{S/B}^{\vee} \cong \omega_{S/B}\left(-\sum_i \left(\nu_i-1\right)E_i\right).
\enqn
\item $J^{\vee\vee} \cong \caO_S\left(-\sum_i \left(\nu_i-1\right)E_i\right)$. Therefore $Z_{d}=\sum_i \left(\nu_i-1\right)E_i$ and
\beqn
J =J^{\vee\vee} \otimes I_{Z_p} = I_{Z_p}\left(-\sum_i \left(\nu_i-1\right)E_i\right).
\enqn
\end{enumerate}
\end{lemma}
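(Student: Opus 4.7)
The plan is to reduce both claims to a local computation at a generic point of each component $E_i$ of a singular fibre, and then to extend line-bundle identifications across the codimension-$2$ critical locus. Both $T_{S/B} = \caHom_{\caO_S}(\Omega_{S/B}^1,\caO_S)$ and $J^{\vee\vee}$ are coherent, rank-$1$, and reflexive, hence invertible on the smooth surface $S$ (regular local rings of dimension $2$ have trivial divisor class group). The tautological identity $(\Omega_{S/B}^{1})^{\vee\vee} = T_{S/B}^{\vee}$ makes the two expressions in (1) automatically the same line bundle, so the real content of (1) is the identification with $\omega_{S/B}(-Z_d)$.

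To identify $J^{\vee\vee}$, I would pick a smooth point of $E_i$ away from the other components of the singular fibre (the complement of such points in $E_i$ is finite). In local analytic or \'etale coordinates $(u,v)$ with $E_i = \{u=0\}$, the function $f$ vanishes along $E_i$ to exact order $\nu_i$, so we can write $f = u^{\nu_i} h$ with $h$ a unit. A direct computation
\beqn
df = \nu_i u^{\nu_i-1} h \, du + u^{\nu_i} \, dh
\enqn
yields $J = (f_u, f_v) = (u^{\nu_i-1})$ generically along $E_i$. Therefore the divisorial part of the subscheme cut out by $J$ is exactly $Z_d = \sum_i (\nu_i-1)E_i$, and on the complement $U := S\setminus \Supp(Z_p)$ of the remaining (finite) critical locus we have $J|_U \cong \caO_U(-Z_d)|_U$. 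Since two line bundles that agree on the complement of a codimension-$2$ subset of a smooth surface are globally isomorphic (the Hartogs-type extension of a nowhere-vanishing section of $\caHom(L_1,L_2)$), this upgrades to $J^{\vee\vee} \cong \caO_S(-Z_d)$; the identity $J = J^{\vee\vee} \otimes I_{Z_p}$ is then immediate from the definition of $Z_p$, proving (2).

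For (1), I would exploit the natural sequence
\beqn
0 \longra T_{S/B} \longra T_S \xrightarrow{T_f} \im(T_f) \longra 0,
\enqn
in which $\im(T_f) = J\otimes f^*T_B$ by definition of $J$. On the open set $U$ all three terms are line bundles, so taking determinants and using $\det T_S = \omega_S^{\vee}$ gives
\beqn
T_{S/B}^{\vee}|_U \;\cong\; \omega_{S/B}(-Z_d)|_U.
\enqn
The same codimension-$2$ extension argument upgrades this to a global isomorphism on $S$, finishing (1).

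The only technical point to justify is the local normal form $f = u^{\nu_i} h$ at a generic point of $E_i$ (obtained by taking $u$ as a local equation of $E_i$ and using that $f$ vanishes to order exactly $\nu_i$ along $E_i$) together with the Hartogs-type extension of line-bundle isomorphisms across the finite locus $Z_p$; both are routine on the smooth surface $S$.
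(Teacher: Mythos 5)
Your proof is correct. Note that the paper itself gives no proof of this statement: it is quoted verbatim from Serrano's Lemma 1.1 in \cite{SerDuk}, so there is no internal argument to compare against. Your route --- the local normal form $t\circ f = u^{\nu_i}h$ at a generic point of $E_i$, which gives $J=(u^{\nu_i-1})$ there and hence identifies $Z_d=\sum_i(\nu_i-1)E_i$, followed by taking determinants in $0\to T_{S/B}\to T_S\to J\otimes f^*T_B\to 0$ and extending the resulting line-bundle isomorphisms across the finite residual locus --- is essentially the standard (and Serrano's original) argument, and each step you invoke (rank-one reflexive sheaves on a smooth surface are invertible; $T_{S/B}$ is saturated in $T_S$ because $\im(T_f)\subseteq f^*T_B$ is torsion-free; Hartogs-type extension of isomorphisms across codimension two) is sound.
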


The following Lemma shows the explicit relation between $\Omega_{S/B}^1$ and $\omega_{S/B}$.

\begin{lemma} \label{lem-versus}
The sheaves $\Omega_{S/B}^1$ and $\omega_{S/B}$ fit into the exact sequence
\beqn
0 \longra \left(f^*\omega_B\left(Z_d\right)\right)_{|Z_d} \longra \Omega_{S/B}^1 \stackrel{\alpha}{\longra} \omega_{S/B} \longra \omega_{S/B|Z} \longra 0.
\enqn
In particular, if $f$ has reduced fibres, then $Z=Z_p$, the map $\alpha$ is injective and $\Omega^1_{S/B} \cong \omega_{S/B}\otimes J$ is torsion-free. In general, $\omega_{S/B}\otimes J$ is the quotient of $\Omega_{S/B}^1$ by its torsion subsheaf.
\end{lemma}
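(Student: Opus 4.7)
The plan is to construct $\alpha$ as a wedge-product map and then identify its image and kernel by combining a direct local computation with the structural information provided by the previous lemma.

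I would first define $\alpha$ via the natural exterior product $\Omega_S^1 \otimes f^*\omega_B \to \omega_S$, which, after twisting by $(f^*\omega_B)^{\vee}$, yields $\beta: \Omega_S^1 \to \omega_{S/B}$; since the wedge of a local section of $f^*\omega_B$ with itself vanishes, $\beta$ factors through $\alpha: \Omega_{S/B}^1 \to \omega_{S/B}$. In local coordinates $x_1, x_2, t$ with $f^*t = F$, one computes $\alpha([dx_1]) = F_{x_2}\eta$ and $\alpha([dx_2]) = -F_{x_1}\eta$ for a local generator $\eta$ of $\omega_{S/B}$, so $\im \alpha = (F_{x_1}, F_{x_2})\omega_{S/B} = J \cdot \omega_{S/B}$, and $\coker \alpha = \omega_{S/B}/J\omega_{S/B} = \omega_{S/B} \otimes \caO_Z = \omega_{S/B|Z}$.

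To pin down $\ker \alpha$ globally, I would dualize the defining exact sequence $0 \to T_{S/B} \to T_S \to J \otimes f^*T_B \to 0$ of $T_{S/B}$ and $J$. Applying $\caHom_{\caO_S}(-, \caO_S)$ and using that $T_{S/B}$ is a line bundle (hence $\caExt_{\caO_S}^i(T_{S/B},\caO_S) = 0$ for $i \geq 1$), together with $T_{S/B}^{\vee} = \omega_{S/B}(-Z_d)$ and $J^{\vee} = \caO_S(Z_d)$ (the latter from $J = I_{Z_p}(-Z_d)$ and $I_{Z_p}^{\vee} = \caO_S$, valid because $Z_p$ has codimension two), one obtains
\[
0 \longra f^*\omega_B(Z_d) \longra \Omega_S^1 \longra \omega_{S/B}(-Z_d) \longra \caExt_{\caO_S}^1(J\otimes f^*T_B, \caO_S) \longra 0.
\]
The middle map, precomposed with the natural inclusion $\omega_{S/B}(-Z_d) \hookrightarrow \omega_{S/B}$, recovers $\beta$ (both arise from dualizing the inclusion $T_{S/B} \hookrightarrow T_S$), so $\ker \beta = f^*\omega_B(Z_d)$. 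The snake lemma applied to
\[
\begin{array}{ccccccccc}
0 & \to & f^*\omega_B & \to & \Omega_S^1 & \to & \Omega_{S/B}^1 & \to & 0 \\
& & \downarrow & & \| & & \downarrow \bar{\alpha} & & \\
0 & \to & f^*\omega_B(Z_d) & \to & \Omega_S^1 & \to & \im \alpha & \to & 0
\end{array}
\]
(with $\bar{\alpha}$ the surjection $\Omega_{S/B}^1 \twoheadrightarrow \im \alpha$, which has the same kernel as $\alpha$) then gives $\ker \alpha \cong f^*\omega_B(Z_d)/f^*\omega_B = (f^*\omega_B(Z_d))|_{Z_d}$, completing the four-term exact sequence.

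The ``in particular'' claims follow easily. If $f$ has reduced fibres then $\nu_i = 1$ for every $i$, so $Z_d = 0$, $\ker \alpha$ vanishes, and $\alpha$ embeds $\Omega_{S/B}^1$ into $\omega_{S/B}$ with image $\omega_{S/B} \otimes J$, which is torsion-free as a subsheaf of the line bundle $\omega_{S/B}$. In general, torsion-freeness of $\omega_{S/B}$ forces the torsion of $\Omega_{S/B}^1$ to lie inside $\ker \alpha$, while conversely $(f^*\omega_B(Z_d))|_{Z_d}$ is supported on the divisor $Z_d$ and hence is torsion; so $\ker \alpha$ is exactly the torsion subsheaf of $\Omega_{S/B}^1$, and $\Omega_{S/B}^1/\mathrm{torsion} \cong \omega_{S/B} \otimes J$. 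The main delicate point is the canonical (not merely local) identification of $\ker \alpha$ with $(f^*\omega_B(Z_d))|_{Z_d}$, which is precisely what the snake-lemma comparison between the original sequence and the dualization of the $T_{S/B}$-sequence provides.
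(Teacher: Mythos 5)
Your proposal is correct and follows essentially the same route as the paper: both construct $\alpha$ from the wedge product, identify the kernel and cokernel of its lift $\widetilde{\alpha}:\Omega_S^1\to\omega_{S/B}$ via Serrano's descriptions of $T_{S/B}^{\vee}$ and $J$ (your dualization of $0\to T_{S/B}\to T_S\to J\otimes f^*T_B\to 0$ is the same computation as the paper's observation that $\widetilde{\alpha}$ is $T_f$ twisted by $\omega_S$), and then descend to $\alpha$ by the snake lemma. The only loose point is the assertion that the dual-restriction map $\Omega_S^1\to T_{S/B}^{\vee}\hookrightarrow\omega_{S/B}$ coincides with $\beta$; this is true (both send $dx_i$ to $\pm F_{x_j}\eta$ in your local coordinates, so in particular they have the same kernel), but it deserves that one-line local check rather than the phrase ``both arise from dualizing.''
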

\begin{proof}
Let us first construct the map $\alpha$. Twisting the exact sequence defining $\Omega_{S/B}^1$ by $f^*\omega_B$, one gets
\beqn
0 \longra \left(f^*\omega_B\right)^{\otimes2} \longra f^*\omega_B \otimes \Omega_S^1 \longra f^*\omega_B \otimes \Omega_{S/B}^1 \longra 0.
\enqn
Wedge product gives a map $\widetilde{\beta}: f^*\omega_B \otimes \Omega_S^1 \ra \omega_S$ vanishing on $\left(f^*\omega_B\right)^{\otimes2}$. Therefore, $\widetilde{\beta}$ induces a map $\beta: f^*\omega_B \otimes \Omega_{S/B}^1 \ra \omega_S$. The wanted $\alpha$ is precisely $\beta$ twisted by $\left(f^*\omega_B\right)^{\vee}$. Denoting by $\widetilde{\alpha}$ the corresponding twist of $\widetilde{\beta}$, we get the following diagram with exact rows
\beqn
\xymatrix{
0 \ar[r] & f^*\omega_B \ar[r] \ar[d] & \Omega_S^1 \ar[r] \ar[d]_{\widetilde{\alpha}} & \Omega_{S/B}^1 \ar[r] \ar[d]^{\alpha} & 0 \\
 & 0 \ar[r] & \omega_{S/B} \ar[r] & \omega_{S/B} \ar[r] & 0
}
\enqn
and the snake lemma gives $\coker\alpha = \coker\widetilde{\alpha}$ and $\ker\alpha = \left(\ker\widetilde{\alpha}\right)/f^*\omega_B$. But $\widetilde{\alpha}$ is exactly the tangent map $T_f$ twisted by $\omega_S$, so by the definition of $J$, $Z$ and $Z_d$ we get
\beqn
\ker \widetilde{\alpha} = f^*\omega_B\left(Z_d\right) \quad \text{and} \quad \coker \widetilde{\alpha} = \omega_{S/B|Z}.
\enqn
To conclude, note that $f^*\omega_B \hookrightarrow \ker\widetilde{\alpha}$ is induced by the natural map $\caO_S \hookrightarrow \caO_S\left(Z_d\right)$, so
\beqn
\ker\alpha = f^*\omega_B \otimes \left(\caO_S\left(Z_d\right)/\caO_S\right) = \left(f^*\omega_B\left(Z_d\right)\right)_{|Z_d}.
\enqn
\end{proof}

We can now proceed to study when $\caE_D$ is torsion-free.

\begin{proposition} \label{prop-tors-free}
Assume that the fibration $f$ is stable, and let $D$ be an effective divisor such that $D \cdot C \leq 2g\left(C\right)-2-C^2$ for any component of a fibre, with strict inequality if $C=F_b$ is a smooth fibre. Then the sheaf $\caE_D = \caExt_f^1\left(\caL_D,f^*\omega_B\right)$ is torsion-free.
\end{proposition}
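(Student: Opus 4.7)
The plan is to apply relative Grothendieck--Serre duality to identify the torsion of $\caE_D$ with the torsion of a single pushforward $R^1 f_*(\caL_D \otimes \omega_{S/B})$, and then verify the latter has no torsion by a fibrewise cohomology computation using the numerical hypothesis on $D$.

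Since $f$ is stable with reduced fibres, Lemma \ref{lem-versus} gives $\omega_{S/B}$ as an honest line bundle, and $\omega_S = \omega_{S/B} \otimes f^*\omega_B$. Thus
\beqn
\caE_D = \caExt^1_f(\caL_D, f^*\omega_B) = \caExt^1_f(\caL_D \otimes \omega_{S/B}, \omega_S),
\enqn
and relative Grothendieck duality for the flat proper morphism $f$ of relative dimension $1$ (with relative dualizing complex $\omega_{S/B}[1]$) yields an isomorphism
\beqn
Rf_* R\caHom(\caL_D \otimes \omega_{S/B}, \omega_S[1]) \cong R\caHom_{\caO_B}(Rf_*(\caL_D \otimes \omega_{S/B}), \omega_B).
\enqn
Taking $H^0$ of both sides, and using on the right the standard exact sequence (valid on the smooth curve $B$ for any coherent $\caG$ on $S$)
\beqn
0 \to \caExt^1_{\caO_B}(R^1 f_* \caG, \omega_B) \to H^0(R\caHom_{\caO_B}(Rf_* \caG, \omega_B)) \to \caHom_{\caO_B}(f_* \caG, \omega_B) \to 0
\enqn
applied to $\caG = \caL_D \otimes \omega_{S/B}$, we obtain
\beqn
0 \to \caExt^1_{\caO_B}\bigl(R^1 f_*(\caL_D \otimes \omega_{S/B}), \omega_B\bigr) \to \caE_D \to \caHom_{\caO_B}\bigl(f_*(\caL_D \otimes \omega_{S/B}), \omega_B\bigr) \to 0.
\enqn

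On the smooth curve $B$, the dual $\caHom_{\caO_B}(-, \omega_B)$ of any coherent sheaf is locally free, so the rightmost term above is torsion-free. Moreover, $\caExt^1_{\caO_B}(M, \omega_B)$ is supported exactly on the torsion of $M$ and vanishes iff $M$ is torsion-free. Consequently, the torsion of $\caE_D$ is identified with the torsion of $R^1 f_*(\caL_D \otimes \omega_{S/B})$, and it suffices to show this latter sheaf is torsion-free (equivalently, locally free). Since $\Omega^1_{S/B}$ is torsion-free on the smooth surface $S$ (Lemma \ref{lem-versus}, reduced fibres case), any local uniformizer of $B$ acts injectively on it, so $\caL_D = \Omega^1_{S/B}(-D)$ and $\caL_D \otimes \omega_{S/B}$ are both $B$-flat; Grauert's theorem then reduces local freeness of $R^1 f_*(\caL_D \otimes \omega_{S/B})$ to local constancy of $b \mapsto h^1(F_b, (\caL_D \otimes \omega_{S/B})|_{F_b})$, which by constancy of the Euler characteristic is equivalent to vanishing of this cohomology group for every $b \in B$.

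Finally, this vanishing is where the numerical hypothesis enters. By Serre duality on the Gorenstein stable fibre $F_b$ (with $\omega_{S/B}|_{F_b} = \omega_{F_b}$) and a careful analysis of the $J = I_{Z_p}$ correction at nodes, the required vanishing boils down to $h^0(F_b, \omega_{F_b}^{-1}(D_b)) = 0$. For smooth $F_b$, the strict inequality $D \cdot F_b < 2g-2$ makes $\omega_{F_b}^{-1}(D_b)$ of negative degree, giving the desired vanishing immediately. For a singular stable $F_b$ with irreducible components $C$, adjunction yields $\deg(\omega_{F_b}^{-1}(D_b)|_C) = D \cdot C - (2g(C)-2-C^2) \leq 0$ on each $C$; a global section must then vanish identically on any component of strictly negative degree, and the compatibility at nodes forces the vanishing to propagate across the connected dual graph of $F_b$. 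The main obstacle is precisely this last combinatorial step on singular fibres where equality is achieved on a non-trivial connected subcurve: one must exploit the dual graph of $F_b$ together with the gluing conditions at the nodes to rule out any surviving global section on this subcurve.
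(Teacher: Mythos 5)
Your duality route is legitimate and, up to the last step, is a genuinely different technical path to the same place as the paper. The paper twists to $\caExt_f^1\left(\caL_D,\caO_S\right)$ and applies the base-change theorem for relative $\caExt$ sheaves (Theorem \ref{thm-base-change-ext}): it first shows $\Ext^2_{\caO_{F_b}}\left(\caL_{D|F_b},\caO_{F_b}\right)=0$ and $f_*\caL_D^{\vee}=0$ (Lemma \ref{lem-rho-iso}), and thereby reduces local freeness of $\caE_D$ to the fibrewise vanishing $\Hom_{\caO_{F_b}}\left(\caL_{D|F_b},\caO_{F_b}\right)=0$ for all $b$. Your relative-duality exact sequence instead reduces torsion-freeness of $\caE_D$ to the vanishing of $R^1f_*\left(\caL_D\otimes\omega_{S/B}\right)$, i.e.\ to $H^1\left(F_b,\caL_{D|F_b}\otimes\omega_{F_b}\right)=0$ for every $b$, which by Serre duality on the Gorenstein fibre is \emph{exactly} the same condition $\Hom_{\caO_{F_b}}\left(\caL_{D|F_b},\caO_{F_b}\right)=0$. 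So the two arguments funnel into an identical fibrewise statement; yours trades Lange's theorem for Grothendieck duality plus ordinary cohomology and base change, which is a reasonable exchange (and your flatness argument for $\caL_D$ is the same as the paper's).

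The gap is that you do not prove this fibrewise vanishing on the singular fibres --- you explicitly label it ``the main obstacle'' --- and the mechanism you propose for closing it is not available. Using the paper's sequence (\ref{ex-seq-LD-fibre}), the quotient of $\caL_{D|F_b}$ by its torsion is $I_{Z'}\cdot M$ with $M=\omega_{F_b}\left(-D_b\right)$ and $Z'$ the set of nodes, so
\beqn
\Hom_{\caO_{F_b}}\left(\caL_{D|F_b},\caO_{F_b}\right)\cong\Hom_{\caO_{F_b}}\left(I_{Z'},M^{-1}\right)\cong H^0\left(\widetilde{F_b},\nu^*M^{-1}\right),
\enqn
because $\caHom_{\caO_{F_b}}\left(I_{Z'},\caO_{F_b}\right)\cong\nu_*\caO_{\widetilde{F_b}}$ at nodal singularities, $\nu$ being the normalization. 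Since $\widetilde{F_b}$ is a \emph{disjoint union} of smooth curves, there are no gluing conditions at the nodes left to exploit: the ``propagation across the connected dual graph'' you invoke simply does not exist after dualizing, and the vanishing must be established component by component. On each component the hypothesis gives $\deg\left(\nu^*M^{-1}_{|\widetilde{C}}\right)=D\cdot C-\left(2g\left(C\right)-2-C^2\right)\leq 0$, which kills sections only when the inequality is strict or when this degree-zero line bundle is non-trivial; the equality case, which is allowed by the hypotheses and actually occurs for the divisors produced by Corollary \ref{cor-adj-global}, is precisely where the content lies and is left unaddressed. Until you show that $\nu^*\left(\omega_{F_b}^{-1}\left(D_b\right)\right)$ has no sections on every component (not just on those of negative degree), the proof is incomplete.
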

\begin{proof}
Since $\caE_D = \caExt_f^1\left(\caL_D,\caO_S\right) \otimes \omega_B$, it is enough to prove that $\caExt_f^1\left(\caL_D,\caO_S\right)$ is locally free. Since $f$ is stable, the critical locus $Z$ consists of reduced points. Therefore $\Omega_{S/B}^1$ and $\caL_D$ are torsion-free, hence flat over $B$, and Theorem \ref{thm-base-change-ext} (base change for $\caExt_f^i$) can be applied. Furthermore, in this situation we have $\caL_D = \omega_{S/B}\left(-D\right) \otimes J$, and using local coordinates it is easy to prove that for any curve $C$ in $S$ we have the following exact sequence
\beq \label{ex-seq-LD-fibre}
0 \longra \caO_{Z \cap C} \longra \caL_{D|C} \longra \omega_{S/B}\left(-D\right)_{|C} \longra \caO_{Z \cap C} \longra 0.
\enq

We will now prove that for any $b \in B$ the base-change maps
$$\tau^i\left(b\right): \caExt_f^i\left(\caL_D,\caO_S\right) \otimes \mC(b) \ra \Ext^i_{\caO_{F_b}}\left(\caL_{D|F_b},\caO_{F_b}\right)$$
are isomorphisms for $i=2$. In fact we will see that both sides vanish. On the one hand, since $\caL_D$ is torsion-free, we have $\caExt_{\caO_S}^q\left(\caL_D,\caO_S\right)=0$ for $q \geq 2$, and $\caExt_{\caO_S}^1\left(\caL_D,\caO_S\right)$ is supported on $Z$. Using the local-global spectral sequence it follows that $\caExt_f^i\left(\caL_D,\caO_S\right)=0$ for $i \geq 2$. On the other hand, since $\caL_D$ is locally free on the smooth points of $F_b$, the sheaves $\caExt_{\caO_{F_b}}^i\left(\caL_{D|F_b},\caO_{F_b}\right)$ are supported on the singular points of $F_b$ for $i \geq 1$. Moreover, since the singularites of $F_b$ are ordinary nodes, a local computation shows that $\caExt_{\caO_{F_b}}^i\left(\caL_{D|F_b},\caO_{F_b}\right)=0$ for $i \geq 2$. Finally the local-global spectral sequence on $F_b$ gives $\Ext_{\caO_{F_b}}^2\left(\caL_{D|F_b},\caO_{F_b}\right) = 0$.

Now Theorem \ref{thm-base-change-ext} with $i=2$ implies that $\tau^1\left(b\right)$ is surjective for all $b \in B$. Using the same theorem with $i=1$, we get that $\caE_D$ is locally free if and only if $\tau^0\left(b\right)$ is surjective for all $b \in B$. Since by Lemma \ref{lem-rho-iso} it holds $\caExt_f^0\left(\caL_D,\caO_S\right) = f_*\caL_D^{\vee} = 0$, we only need to prove that
$$\Hom_{\caO_{F_b}}\left(\caL_{D|F_b},\caO_{F_b}\right) = 0,$$
which follows from the condition on the $D \cdot C$ for all the components $C$ of $F_b$.
\end{proof}

We can finally state the final result concerning the global splitting of (\ref{pull-back-seq}).

\begin{corollary} \label{cor-converse-split}
If $D \cdot C \leq 2g\left(C\right)-2-C^2$ for any component $C$ of a fibre, with strict inequality for a general fibre $F_b$, then $f$ is supported on $D$ if and only if the pull-back in (\ref{pull-back-seq}) splits.
\end{corollary}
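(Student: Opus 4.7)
The plan is to derive the corollary as a direct combination of Lemma \ref{lem-rho-iso} and Proposition \ref{prop-tors-free}, read off from the commutative square relating $\xi_D$ to $\widetilde{\xi_D}$ via $\rho_D$. The stability of $f$ is tacitly in force, as it is needed for Proposition \ref{prop-tors-free} anyway.

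For the easy direction, I would observe that if (\ref{pull-back-seq}) splits globally, then $\xi_D = \iota^*(\xi)$ vanishes in $\Ext^1_{\caO_S}(\caL_D, f^*\omega_B)$, so $\widetilde{\xi_D} = \rho_D(\xi_D) = 0$ is trivially a torsion section of $\caE_D$. By the discussion following Lemma \ref{lem-inj-R1TDE}, this is exactly the condition that $f$ be supported on $D$.

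For the non-trivial direction, I would argue as follows. Assume $f$ is supported on $D$, so that $\widetilde{\xi_D} \in H^0(B, \caE_D)$ is torsion. The hypothesis $D \cdot C \leq 2g(C) - 2 - C^2$ on every component of a fibre, together with strict inequality on a general fibre, is precisely what Proposition \ref{prop-tors-free} requires in order to conclude that $\caE_D$ is torsion-free; in particular $\widetilde{\xi_D} = 0$. The strict inequality $D \cdot F_b < 2g-2$ on a general fibre is also the hypothesis of Lemma \ref{lem-rho-iso}, which yields
\[
\rho_D : \Ext^1_{\caO_S}(\caL_D, f^*\omega_B) \xrightarrow{\;\sim\;} H^0(B, \caE_D).
\]
Combining these, $\rho_D(\xi_D) = \widetilde{\xi_D} = 0$ together with the injectivity of $\rho_D$ gives $\xi_D = 0$, i.e.\ (\ref{pull-back-seq}) splits.

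There is essentially no obstacle at this stage, since all the substantive work has already been done. The only point worth underlining is that the two intersection-number hypotheses are carefully calibrated so that both prior results apply simultaneously: the strict inequality on the general fibre does double duty (killing $\ker \rho_D$ via Lemma \ref{lem-rho-iso} and contributing to the base-change input of Proposition \ref{prop-tors-free}), while the non-strict inequality $D \cdot C \leq 2g(C)-2-C^2$ on components of singular fibres is exactly what prevents the base-change maps $\tau^0(b)$ from having a nonzero cokernel, thereby ensuring torsion-freeness of $\caE_D$ at every $b \in B$.
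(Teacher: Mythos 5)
Your proposal is correct and is precisely the argument the paper intends: the corollary is stated without proof as the immediate combination of the equivalence ``supported $\Leftrightarrow$ $\widetilde{\xi_D}$ torsion'' from the discussion after Lemma \ref{lem-inj-R1TDE}, the injectivity of $\rho_D$ from Lemma \ref{lem-rho-iso}, and the torsion-freeness of $\caE_D$ from Proposition \ref{prop-tors-free}. Your reading of how the two intersection-number hypotheses feed the two prior results, and your remark that stability is tacitly assumed, both match the paper.
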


\begin{remark}
The condition $D \cdot C \leq 2g\left(C\right)-2-C^2$ seems a bit strange at first sight, but it is not so restrictive because every supporting divisor produced by the global adjoint map (Corollary \ref{cor-adj-global}) satisfy it.
\end{remark}

Before going through the proof of Theorem \ref{main-thm}, we need a technical result (Proposition \ref{prop-conjunta}) about inclusions $\caL \hra \Omega_{S/B}^1$ or $\caL \hra \omega_{S/B}$ lifting to $\Omega_S^1$ (see Definition \ref{df-lift} below). It will allow us to improve the properties of a supporting divisor.

\begin{definition} \label{df-lift}
We say that a rank-one subsheaf $\caL$ of $\Omega_{S/B}^1$ (resp. $\omega_{S/B}$) {\em lifts to} $\Omega_S^1$ if the inclusion can be factored as an injection $\caL \hra \Omega_S^1$ followed by the projection $\Omega_S^1 \ra \Omega_{S/B}^1$ (resp. the same projection composed with $\alpha: \Omega_{S/B}^1 \ra \omega_{S/B}$). Equivalently, $\caL \hra \Omega_{S/B}^1$ lifts to $\Omega_S^1$ if the pull-back row in the following diagram is split.
\beqn
\xymatrix{
\xi_{\caL} : & 0 \ar[r] & f^*\omega_B \ar[r] \ar@{=}[d] & \caF_{\caL} \ar[r] \ar@{^(->}[]+<0mm,-3mm>;[d] & \caL \ar[r] \ar@{^(->}[]+<0mm,-3mm>;[d] & 0 \\
\xi : & 0 \ar[r] & f^*\omega_B \ar[r] & \Omega_S^1 \ar[r] & \Omega_{S/B}^1 \ar[r] & 0
}
\enqn
\end{definition}

\begin{proposition} \label{prop-conjunta}
Let $f: S \ra B$ be a fibration with reduced fibres, and $E$ an effective divisor without components contracted by $f$. Suppose that $\caL_E \hra \Omega_{S/B}^1$ lifts to $\Omega_S^1$, and also that $E \cdot C \leq 2g\left(C\right)-2-C^2$ for any component $C$ of a fibre, with strict inequality for a smooth fibre $F_b$. Then there exists an effective divisor $D \leq E$ on $S$ such that
\begin{enumerate}
\item $D \cdot C \leq 2g\left(C\right)-2-C^2$ for any component $C$ of a fibre, strictly for a smooth fibre,
\item the inclusions $\caL_E \hra \Omega_{S/B}^1$ and $\omega_{S/B}\left(-D\right) \hra \omega_{S/B}$ fit into the following chain
\beqn
\caL_E \longhra \omega_{S/B}\left(-D\right) \longhra \Omega_{S/B}^1 \stackrel{\alpha}{\longhra} \omega_{S/B},
\enqn
\item the injection $\omega_{S/B}\left(-D\right) \hra \Omega_{S/B}^1$ lifts to $\Omega_S^1$, and
\item the quotient $\Omega_S^1 / \omega_{S/B}\left(-D\right)$ is isomorphic to
\beqn
f^*\omega_B \otimes \caO_S\left(D\right) \otimes I_{\Gamma}
\enqn
for some finite subscheme $\Gamma \subset S$, hence torsion-free.
\end{enumerate}
\end{proposition}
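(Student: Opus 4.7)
The plan is to use the hypothesized lift $\caL_E \hookrightarrow \Omega_S^1$ to produce $D$ via a suitable saturation inside $\Omega_S^1$. Concretely, let $\caM \subseteq \Omega_S^1$ be the saturation of the image of $\caL_E$. As a rank-one saturated subsheaf of a rank-two locally free sheaf on the smooth surface $S$, $\caM$ is automatically a line bundle, and the quotient $Q := \Omega_S^1 / \caM$ is torsion-free of rank one, hence isomorphic to $\caN \otimes I_\Gamma$ for some line bundle $\caN$ and some finite subscheme $\Gamma \subset S$.

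The central step is to prove that the composition $\widetilde{\alpha}|_{\caM} : \caM \hookrightarrow \Omega_S^1 \xrightarrow{\widetilde{\alpha}} \omega_{S/B}$ is injective. Since the fibres are reduced, Lemma \ref{lem-versus} gives $\ker \widetilde{\alpha} = f^*\omega_B$, which is a saturated line subbundle of $\Omega_S^1$; two distinct saturated rank-one subsheaves of a rank-two locally free sheaf meet in zero, so a failure of injectivity would force $\caM = f^*\omega_B$, contradicting that $\widetilde{\alpha}|_{\caL_E}$ equals the nonzero composition $\caL_E \hookrightarrow \Omega_{S/B}^1 \xrightarrow{\alpha} \omega_{S/B}$. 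Thus $\caM$ embeds in $\omega_{S/B}$, and since $\im \widetilde{\alpha} = \alpha(\Omega_{S/B}^1) = \omega_{S/B} \otimes I_{Z_p}$, this embedding automatically factors through $\Omega_{S/B}^1$. Writing the inclusion $\caM \hookrightarrow \omega_{S/B}$ as $\caM \cong \omega_{S/B}(-D)$ for a uniquely determined effective divisor $D$ produces the chain of (2), and (3) is automatic since $\caM \subseteq \Omega_S^1$ by construction.

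To verify $D \leq E$, I would tensor the inclusion $\caL_E \subseteq \omega_{S/B}(-D)$ with $\omega_{S/B}^{-1}$ to obtain $\caO_S(-E) \otimes I_{Z_p} \subseteq \caO_S(-D)$ inside $\caO_S$, and compare at the generic point of any irreducible divisor (where $I_{Z_p}$ is trivial, $Z_p$ being zero-dimensional). The numerical condition (1) then follows because $E$ has no components contained in fibres and $D \leq E$ imply that $E - D$ is effective and still fibre-component-free, so $(E-D) \cdot C \geq 0$ for every fibre component $C$, whence $D \cdot C \leq E \cdot C \leq 2g(C)-2-C^2$, with the strict inequality preserved for smooth fibres. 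Finally, (4) is obtained by taking determinants of $0 \to \caM \to \Omega_S^1 \to \caN \otimes I_\Gamma \to 0$: this forces $\caN \cong \omega_S \otimes \caM^{-1} \cong \omega_{S/B} \otimes f^*\omega_B \otimes \omega_{S/B}^{-1} \otimes \caO_S(D) = f^*\omega_B \otimes \caO_S(D)$, exactly the stated form.

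The genuinely delicate step is the injectivity of $\widetilde{\alpha}|_{\caM}$ in the second paragraph: it is what uses the reduced-fibre hypothesis in an essential way, since otherwise $\ker \widetilde{\alpha}$ would pick up divisorial contributions from $Z_d$ and the clean dichotomy forcing $\caM$ to avoid the kernel would break down. Once this injectivity is secured, all four conclusions reduce to bookkeeping with line bundles and divisors on $S$.
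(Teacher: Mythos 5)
Your proof is correct and follows essentially the same route as the paper: both saturate the image of $\caL_E$ inside $\Omega_S^1$ to obtain the line bundle $\caM \cong \omega_{S/B}\left(-D\right)$, use the injectivity of $\alpha$ for reduced fibres to embed $\caM$ into $\omega_{S/B}$, and compare with $\caL_E$ in codimension one to get $D \leq E$ (the paper packages this last point as the identification $\caL_E^{\vee\vee} \cong \omega_{S/B}\left(-E\right)$ in its Step 1). Your determinant argument for item 4 is in fact slightly cleaner than the paper's, which only computes $c_1$ and therefore needs an extra argument with a section of $\caO_S\left(D\right) \otimes L \otimes I_Z$ to rule out a twist by $L \in \Pic^0\left(S\right)$; taking determinants along the exact sequence pins down the line bundle in $\Pic\left(S\right)$ directly.
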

\begin{proof}
We proceed in two steps.
\begin{itemize}
\item[\textit{Step 1:}] \textit{$E$ satisfies 1, 2 and 3.}

We first show that the double dual $\caL_E^{\vee\vee}$ still injects into $\Omega_{S/B}^1$ and lifts to $\Omega_S^1$. Indeed, $\caL_E$ also injects in $\omega_{S/B}$ because $\alpha$ is injective. Hence the inclusions of $\caL_E$ into $\Omega_S^1$ and $\omega_{S/B}$ induce inclusions of $\caL_E^{\vee\vee}$ that fit into the commutative diagram
\beqn
\xymatrix{
 & \caL_E^{\vee\vee} \ar@{_(->}[]+<-1mm,-4mm>;[dl] \ar@{^(-->}[]+<0mm,-4mm>;[d] \ar@{^(->}[]+<3mm,-4mm>;[dr] & \\
\Omega_S^1 \ar@{->>}[r] & \Omega_{S/B}^1 \ar[r]^{\alpha}& \omega_{S/B}
}
\enqn
Therefore the composition $\caL_E^{\vee\vee} \hra \Omega_S^1 \ra \Omega_{S/B}^1$ must still be injective, as claimed, and it clearly lifts to $\Omega_S^1$ by construction. We have then a sequence of nested sheaves
\beqn
\caL_E \longhra \caL_E^{\vee\vee} \longhra \Omega_{S/B}^1 \stackrel{\alpha}{\longhra} \omega_{S/B},
\enqn
and we will be done if $\caL_E^{\vee\vee} \cong \omega_{S/B}\left(-E\right)$. In fact, completing the diagram with exact rows
\beqn
\xymatrix{
0 \ar[r] & \caL_E \ar[r] \ar@{-->}[d] & \Omega_{S/B}^1 \ar[r] \ar@{^(->}[]+<0mm,-4mm>;[d]^{\alpha} & \Omega_{S/B|E}^1 \ar[d] \ar[r] & 0 \\
0 \ar[r] & \omega_{S/B}\left(-E\right) \ar[r] & \omega_{S/B} \ar[r] & \omega_{S/B|E} & 
}
\enqn
we obtain an inclusion $\iota_E: \caL_E \hra \omega_{S/B}\left(-E\right)$ which is an isomorphism away from the critical points of $f$, and the same holds for its double dual $\iota_E^{\vee\vee}: \caL_E^{\vee\vee} \hra \omega_{S/B}\left(-E\right)$. But the critical points form a set of codimension 2 because $f$ has reduced fibres, hence $\iota_E^{\vee\vee}$ is an isomorphism, as wanted.

\item[\textit{Step 2:}] \textit{Removing the torsion of the cokernel.}

We have already obtained that $\omega_{S/B}\left(-E\right)$ lifts to $\Omega_S^1$. Denote by $\caM_0 \subseteq \Omega_S^1$ its image, and by $\widetilde{\caK}$ the quotient $\Omega_S^1/\caM_0$. Let $\caT$ be the torsion subsheaf of $\widetilde{\caK}$, and $\caK=\widetilde{\caK}/\caT$ its torsion-free quotient. Finally, let $\caM$ be the kernel of the composition of surjections $\Omega_S^1 \thra \widetilde{\caK} \thra \caK$.

We will now see that $\caM \cong \omega_{S/B}\left(-D\right)$ for some $0 \leq D \leq E$. Clearly, it is torsion-free, and the inclusion $\caM \hra \Omega_S^1$ factors as $\caM \hra \caM^{\vee\vee} \hra \Omega_S^1$. Consider the exact diagram
\beqn
\xymatrix{
 & & 0 \ar[d] & 0 \ar[d] & & \\
 & 0 \ar[r] & \caM \ar[r] \ar[d] & \caM^{\vee\vee} \ar[r] \ar[d] & \caG \ar[r] & 0 \\
 & 0 \ar[r] & \Omega_S^1 \ar@{=}[r] \ar[d] & \Omega_S^1 \ar[r] \ar[d] & 0 & \\
 0 \ar[r] & \caG \ar[r] & \caK \ar[r] \ar[d] & \caF \ar[r] \ar[d] & 0 \\
 & & 0 & 0 & &
}
\enqn
where we have used the snake lemma to identify the cokernel of the first row and the kernel of the last row. On the one hand, both $\caM$ and $\caM^{\vee\vee}$ have rank one, so $\caG$ is a torsion sheaf and, on the other hand, $\caG$ is torsion free since $\caK$ is. Therefore $\caG = 0$ and $\caM \cong \caM^{\vee\vee}$ is locally free. To finish, the composition $\caM \hra \Omega_S^1 \ra \omega_{S/B}$ is injective. Indeed, the image $\widetilde{\caM}$ is of rank 1 because $\caM_0 \subseteq \caM$ and the image of $\caM_0$ is $\omega_{S/B}\left(-E\right)$, so the kernel of $\caM \ra \omega_{S/B}$ is a rank-zero subsheaf of a torsion-free sheaf, hence zero. Therefore,
\beqn
\caM \cong \widetilde{\caM} = \omega_{S/B}\left(-D\right)
\enqn
with $D \leq E$ because by construction $\omega_{S/B}\left(-E\right) \subseteq \widetilde{\caM}$.

For the other assertion about $\caK = \Omega_S^1/\omega_{S/B}\left(-D\right)$, we first compute the Chern class
\beqn
c_1\left(\caK\right) = c_1\left(\Omega_S^1\right) - c_1\left(\omega_{S/B}\left(-D\right)\right) = c_1\left(f^*\omega_B \otimes \caO_S\left(D\right)\right).
\enqn
Since $\caK$ is torsion-free, this means that $\caK \cong f^*\omega_B \otimes \caO_S\left(D\right) \otimes L \otimes I_{\Gamma}$ for some finite subscheme $\Gamma \subset S$ and some $L \in \Pic^0\left(S\right)$. Consider now the diagram of exact rows
\beq \label{diag-aux}
\xymatrix{
0 \ar[r] & \omega_{S/B}\left(-D\right) \ar[r] \ar@{=}[d] & \Omega_S^1 \ar[d] \ar[r] & \caK \ar[r] \ar[d] & 0 \\
0 \ar[r] & \omega_{S/B}\left(-D\right) \ar[r] & \omega_{S/B} \ar[r] & \omega_{S/B|D} \ar[r] & 0 \\
}
\enq
Since $f$ has reduced fibres, the map $\alpha: \Omega_{S/B}^1 \ra \omega_{S/B}$ is injective and therefore the central map in (\ref{diag-aux}) has kernel $f^*\omega_B$ and cokernel $\omega_{S/B|Z'}$. The snake lemma leads then to the exact sequence
\beqn
0 \longra f^*\omega_B \longra \caK \longra \omega_{S/B|D} \longra \omega_{S/B|Z'} \longra 0.
\enqn
The first map corresponds to a section
\beqn
\sigma \in H^0\left(S,\caO_S\left(D\right) \otimes L \otimes I_Z\right) \subset H^0\left(S,\caO_S\left(D\right) \otimes L\right)
\enqn
whose zero scheme is $D$. Indeed, the zero scheme $Z\left(\sigma\right)$ is contained in $D$, and coincides with it outside the finite subscheme $Z'$. This implies that $L \cong \caO_S$ and we are done.
\end{itemize}
\end{proof}

\begin{remark}[About Step 2 in the proof of Proposition \ref{prop-conjunta}]
If a sheaf of the form $\omega_{S/B}\left(-E\right)$ lifts to $\caM_0 \subset \Omega_S^1$, there is a geometric interpretation of the support of the divisor $E$: it is the locus where $\caM_0 \subset \Omega_S^1$ is not transverse to $f^*\omega_B \subset \Omega_S^1$, that is
\begin{align*}
\Supp E & = \left\{p \tq \im\left(\left(f^*\omega_B \oplus \caM_0\right)_p \ra \Omega_{S,p}^1\right) \neq \Omega_{S,p}^1\right\} =\\
 & = \left\{p \tq \im\left(\left(f^*\omega_B \oplus \caM_0\right) \otimes \mC\left(p\right) \ra \Omega_S^1 \otimes \mC\left(p\right)\right) \neq \Omega_S^1 \otimes \mC\left(p\right)\right\}.
\end{align*}
The failure of the transversality at some regular point $p \in E'$ may occur either because
\begin{enumerate}
\item the images of $\caM_0\otimes\mC(p)$ and $\left(f^*\omega_B\right)\otimes\mC(p)$ in $\Omega_S^1 \otimes \mC\left(p\right)$ coincide, or
\item because $\caM_0\otimes\mC(p)$ maps to zero in $\Omega_S^1\otimes\mC(p)$.
\end{enumerate}
The first case means that not all local sections of $\caM_0$ vanish at $p$, but their values are proportional to pull-backs of 1-forms on $B$, while the second case means that all local sections of $\caM_0$ vanish at $p$. A computation in local coordinates shows that if the second case happens along some components $E_0$ of $E$, the quotient sheaf $\Omega_S^1/\caM_0$ would have torsion supported on $E_0$. The last step in the proof of Proposition \ref{prop-conjunta} replaces $E$ by $D=E-E_0$.
\end{remark}

We close now this section with Theorem \ref{main-thm} about the structure of fibrations supported on relatively rigid divisors (that is, divisors whose restriction to a general fibre is rigid).

\begin{theorem} \label{main-thm}
Let $S$ be a compact surface, and $f: S \ra B$ a stable fibration by curves of genus $g$ and relative irregularity $q_f =q\left(S\right) - g\left(B\right) \geq 2$. Suppose $f$ is supported on an effective divisor $D$ without components contained in fibres. Suppose also that $D \cdot C \leq 2g\left(C\right)-2-C^2$ for any component $C$ of a fibre, and that $h^0\left(F,\caO_F\left(D_{|F}\right)\right) = 1$ for some smooth fibre $F$. Then there is another fibration $h: S \ra B'$ over a curve of genus $g\left(B'\right) = q_f$. In particular $S$ is a covering of the product $B \times B'$, and both surfaces have the same irregularity.
\end{theorem}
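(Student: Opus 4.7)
\emph{Overall strategy.} I plan to produce an independent second fibration $h\colon S \ra B'$ by finding a saturated rank-one subsheaf of $\Omega_S^1$ with enough global sections, applying classical Castelnuovo-de Franchis, and bounding $g(B')$ from above via the Albanese map of $(f,h)$. Concretely, Corollary \ref{cor-converse-split} splits the pull-back sequence $\xi_D$ globally (the strict inequality $D \cdot F < 2g-2$ it requires is automatic: rigidity and Riemann-Roch give $\deg D_{|F} \leq g$, while $q_f \geq 2$ combined with $f$ non-trivial and Beauville's bound $q_f < g$ forces $g \geq 3$). Then Proposition \ref{prop-conjunta} provides an effective $D' \leq D$ for which $\omega_{S/B}(-D') \hra \Omega_S^1$ is a saturated line subbundle with torsion-free quotient $f^*\omega_B \otimes \caO_S(D') \otimes I_\Gamma$, and whose projection to $\Omega_{S/B}^1$ is the standard inclusion $\omega_{S/B}(-D') \hra \Omega_{S/B}^1$.

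\emph{Key estimate on global sections.} From $0 \ra f^*\omega_B \ra \Omega_S^1 \ra \Omega_{S/B}^1 \ra 0$ the image $V$ of $H^0(S,\Omega_S^1)$ in $H^0(S,\Omega_{S/B}^1) \subseteq H^0(S,\omega_{S/B})$ has dimension $q(S)-g(B) = q_f$. The crucial claim is $V \subseteq H^0(S,\omega_{S/B}(-D'))$. Fix a general smooth fibre $F = F_b$ and let $\bar\omega$ be the reduction of $\omega \in H^0(\Omega_S^1)$ to $H^0(F,\omega_F)$. Griffiths transversality on a global $1$-form (equivalently, the vanishing of the Higgs field $\theta\colon f_*\omega_{S/B} \ra R^1f_*\caO_S \otimes \omega_B$ on the image of $H^0(\Omega_S^1)$) gives $\xi_b \cup \bar\omega = 0$ in $H^1(F,\caO_F)$. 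Writing $\xi_b = \delta(\eta)$ for the unique $\eta \in H^0(D'_{|F}, T_F(D'_{|F})_{|D'_{|F}})$ (uniqueness from $H^0(F, T_F(D'_{|F})) = 0$, since $\deg D'_{|F} < 2g-2$), the compatibility of cup product with connecting homomorphisms rewrites this as $\delta'(\bar\omega_{|D'_{|F}} \cdot \eta) = 0$, where $\delta'$ is the connecting map of $0 \ra \caO_F \ra \caO_F(D'_{|F}) \ra N_{D'_{|F}/F} \ra 0$. The rigidity $h^0(F,\caO_F(D'_{|F})) = 1$ (inherited from $D$, since $D' \leq D$ has no component along $F$) identifies the only section of $\caO_F(D'_{|F})$ with the defining section of $D'_{|F}$, which restricts to zero on $D'_{|F}$; hence $\delta'$ is injective and $\bar\omega_{|D'_{|F}} \cdot \eta = 0$ in $H^0(D'_{|F}, N_{D'_{|F}/F})$. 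Because the torsion-removal step of Proposition \ref{prop-conjunta} is precisely the elimination of the horizontal locus on which the lift of $\omega_{S/B}(-D')$ to $\Omega_S^1$ degenerates, $\eta$ is nowhere vanishing on $D'_{|F}$, so $\bar\omega_{|D'_{|F}} = 0$, i.e.\ $\bar\omega \in H^0(F,\omega_F(-D'_{|F}))$. Since $D'$ has no vertical components, this fibrewise vanishing globalizes to $V \subseteq H^0(S,\omega_{S/B}(-D'))$, giving $h^0(\omega_{S/B}(-D')) \geq q_f \geq 2$.

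\emph{Castelnuovo-de Franchis and the Albanese bound.} The classical Castelnuovo-de Franchis theorem applied to the saturated line subbundle $\omega_{S/B}(-D') \hra \Omega_S^1$ yields a fibration $h\colon S \ra B'$ with $g(B') \geq q_f$ and $\omega_{S/B}(-D') \subseteq h^*\omega_{B'}$. Since $\omega_{S/B}(-D') \cap f^*\omega_B = 0$ inside $\Omega_S^1$ (its image in $\Omega_{S/B}^1$ is nonzero), the line subbundles $h^*\omega_{B'}$ and $f^*\omega_B$ are distinct and $f,h$ cannot share a common intermediate curve; thus $(f,h)\colon S \ra B \times B'$ is generically finite and $\Omega^1_{B \times B'} = f^*\omega_B \oplus h^*\omega_{B'}$ injects into $\Omega_S^1$. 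Taking $H^0$ gives $g(B) + g(B') \leq q(S) = g(B) + q_f$, so $g(B') \leq q_f$, forcing $g(B') = q_f$ and $q(S) = q(B \times B')$. The main obstacle is the key estimate, especially the verification that the torsion-removal in Proposition \ref{prop-conjunta} really produces an $\eta$ nowhere vanishing on $D'_{|F}$ so that the adjoint-type computation cleanly yields $V \subseteq H^0(\omega_{S/B}(-D'))$; the rest of the argument is essentially formal.
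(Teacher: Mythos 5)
Your proposal follows the paper's skeleton exactly (Corollary \ref{cor-converse-split}, then Proposition \ref{prop-conjunta}, then Castelnuovo--de Franchis, then the covering $S \to B\times B'$ to force $g(B')=q_f$), but you prove the central estimate $h^0\left(S,\omega_{S/B}\left(-D'\right)\right)\geq q_f$ by a genuinely different argument. The paper never touches fibrewise cohomology at this point: it shows $h^0\left(\caK\right)=g\left(B\right)$ for the torsion-free cokernel $\caK\cong f^*\omega_B\otimes\caO_S\left(D\right)\otimes I_\Gamma$, by extracting $0\to f^*\omega_B\to\caK\to\omega_{S/B|D}$ from the snake lemma and using rigidity only to make $f_*\caK$ a rank-one bundle, whose quotient by $\omega_B$ must then be torsion inside the torsion-free $f_*\left(\omega_{S/B|D}\right)$ (Lemma \ref{lem-final-2.4}); the estimate follows from $h^0\left(\omega_{S/B}\left(-D\right)\right)\geq q\left(S\right)-h^0\left(\caK\right)$. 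You instead run a Schiffer-variation/adjoint computation on a general fibre, using rigidity to make the connecting map $\delta'$ injective and concluding $\bar\omega_{|D'_{|F}}\cdot\eta=0$, then globalizing. Your argument is correct, and the step you rightly flag --- that $\eta$ is a unit at every point of $D'_{|F}$ for general $F$ --- does hold: in adapted coordinates the lift sends the generator $x^n\,dx$ of $\omega_{S/B}\left(-D'\right)$ to $x^n\,dx+g\left(x,t\right)dt$, the quotient $\Omega_S^1/\caM$ acquires torsion along $\left\{x=0\right\}$ precisely when $x\mid g$, and $\eta$ at the point $D'\cap F_t$ is the class of $g\left(\cdot,t\right)$; so part 4 of Proposition \ref{prop-conjunta} is exactly the statement you need, and the globalization again rests on Lemma \ref{lem-final-2.4}. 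So the two proofs consume the same two inputs (torsion-freeness of the cokernel and of $f_*\left(L_{|D}\right)$) but in different orders; yours yields the sharper geometric statement that every relative $1$-form vanishes along $D'$, at the cost of the fibrewise local analysis, while the paper's is shorter bookkeeping with direct images. Two small points: your claim that $g\geq 3$ is ``automatic'' invokes non-triviality of $f$, which is not a hypothesis of the theorem --- the paper instead disposes of $g=2$ separately by noting that then $q_f=g$ forces $f$ trivial and the conclusion is immediate; and when applying Castelnuovo--de Franchis you should say a word identifying $H^0\left(S,\omega_{S/B}\left(-D'\right)\right)$ with the sections of the lifted line subbundle of $\Omega_S^1$ (immediate from the splitting $H^0\left(\Omega_S^1\right)=H^0\left(\caL\right)\oplus f^*H^0\left(\omega_B\right)$, using injectivity of $\alpha$), since the wedge-vanishing must be checked for $1$-forms on $S$.
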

\begin{proof}

Notice that if $g = 2$, then $g = q_f$ and $f$ is trivial according to the Lemma in the Appendix of \cite{Deb-Beau}. The statement is therefore immediate in this case, and we can assume from now on that $g>2$.

By Riemann-Roch, $h^0\left(F,\caO_F\left(D_{|F}\right)\right)=1$ implies $\deg \left(D_{|F}\right) = D \cdot F \leq g < 2g-2$. Therefore we can apply Corollary \ref{cor-converse-split} and hence the inclusion $\caL_D \hra \Omega_{S/B}^1$ lifts to $\Omega_S^1$. Applying Proposition \ref{prop-conjunta}, we can replace $D$ by a subdivisor (still called $D$ for simplicity) and assume that $\omega_{S/B}\left(-D\right)$ lifts to $\Omega_S^1$ and that the cokernel $\caK = \caK_D$ of the lifting is torsion-free, isomorphic to $f^*\omega_B \otimes \caO_S\left(D\right) \otimes I_{\Gamma}$ for some finite subscheme $\Gamma \subset S$. Since we have replaced $D$ by a subdivisor, it still holds that $h^0\left(F,\caO_F\left(D_{|F}\right)\right) = 1$ for some smooth fibre $F$.

\noindent\textit{Claim:} $h^0\left(S,\omega_{S/B}\left(-D\right)\right) \geq q_f$. Indeed, it follows from the exact sequence
\beqn
0 \longra \omega_{S/B}\left(-D\right) \longra \Omega_S^1 \longra \caK \longra 0
\enqn
that $h^0\left(\omega_{S/B}\left(-D\right)\right) \geq h^0\left(\Omega_S^1\right) - h^0\left(\caK\right) = q\left(S\right) - h^0\left(\caK\right)$, so it is enough to prove that $h^0\left(\caK\right) = g\left(B\right)$.

Since $f$ has reduced fibres, $\Omega_{S/B}^1$ is a subsheaf of $\omega_{S/B}$ (Lemma \ref{lem-versus}) and the sequence
\beqn
0 \longra f^*\omega_B \longra \Omega_S^1 \longra \omega_{S/B}
\enqn
is exact. Applying the snake lemma to the diagram of exact rows
\beqn
\xymatrix{
0 \ar[r] & \omega_{S/B}(-D) \ar[r] \ar@{=}[d] & \Omega_S^1 \ar[r] \ar[d] & \caK \ar[r] \ar[d] & 0 \\
0 \ar[r] & \omega_{S/B}(-D) \ar[r] & \omega_{S/B} \ar[r] & \omega_{S/B|D} \ar[r] & 0
}
\enqn
we get that the kernel of $\caK \ra \omega_{S/B|D}$ is also $f^*\omega_B$. Therefore, taking direct images, we obtain the following exact sequence of sheaves on $B$
\beqn
0 \longra \omega_B \longra f_*\caK \longra f_*\omega_{S/B|D}.
\enqn

Since $\caK \cong f^*\omega_B \otimes \caO_S\left(D\right) \otimes I_{\Gamma}$ is torsion-free and $D_{|F}$ is rigid for a general fibre $F$,
\beqn
f_*\caK = \omega_B \otimes f_*\left(\caO_S(D)\otimes I_{\Gamma}\right)
\enqn
is a vector bundle (torsion-free over a curve) of rank one. Therefore, the cokernel of $\omega_B \hookrightarrow f_*\caK$ must be a torsion subsheaf of $f_*\omega_{S/B|D}$. But the latter is torsion-free because $D$ has no component contracted by $f$ (see Lemma \ref{lem-final-2.4} below), so the injection $\omega_B \hookrightarrow f_*\caK$ is in fact an isomorphism, and
\beqn
h^0(S,\caK) = h^0(B,f_*\caK) = h^0(B,\omega_B) = g(B),
\enqn
finishing the proof of the claim.

Since the lifting of $\omega_{S/B}\left(-D\right)$ to $\Omega_S^1$ is a line bundle $\caL$, the wedge product of any two of its sections is zero. Therefore, since we have just seen that $h^0\left(\caL\right) \geq q_f \geq 2$, the Castelnuovo-de Franchis Theorem (\cite{Cat}, Theorem 1.9) implies the existence of the fibration $h: S \ra B'$ over a curve $B'$ of genus $g\left(B'\right) \geq q_f$.

It remains to show that $g\left(B'\right) = q_f$, which follows from the last structural statement. In fact, the two fibrations give a covering $\pi$ completing the diagram
\beqn
\xymatrix{
 & S \ar[dl]_f \ar[dr]^h \ar@{-->}[d]^{\pi}& \\
B & B \times B' \ar[l] \ar[r] & B'
}
\enqn
Since $\pi$ is surjective, $q \left(S\right) \geq q\left(B\times B'\right) = g\left(B\right) + g\left(B'\right)$, hence $g\left(B'\right) \leq q_f$, and the proof is finished.
\end{proof}

The proof of the following Lemma is immediate and is left to the reader.

\begin{lemma} \label{lem-final-2.4}
If $f: S \ra B$ is any fibration, $D$ is an effective divisor on $S$ without components contracted by $f$, and $L$ is any line bundle on $S$, then $f_* \left(L_{|D}\right)$ is a torsion-free sheaf on $B$.
\end{lemma}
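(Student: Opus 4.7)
The plan is to check torsion-freeness via associated points. Since $B$ is a smooth curve, a coherent $\caO_B$-module is torsion-free if and only if all of its associated points coincide with the generic point of $B$. Thus I would reduce the statement to verifying $\Supp$-theoretic information about $f_*(L_{|D})$.

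First, I would note that since $S$ is a smooth surface and $D$ is effective, $D$ is a Cartier divisor, so locally $\caO_D \cong \caO_S/(g)$ for some non-zero-divisor $g \in \caO_S$. In particular $\caO_D$ is Cohen--Macaulay of pure dimension $1$, so its associated points are exactly the generic points of the irreducible components of $D$ (no embedded primes can occur even if $D$ is non-reduced). Since $L_{|D} = L\otimes_{\caO_S}\caO_D$ is an invertible $\caO_D$-module, its associated points agree with those of $\caO_D$.

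Next, the hypothesis that $D$ has no component contracted by $f$ means that $f_{|D}\colon D \to B$ is proper with zero-dimensional fibres, hence finite. Under a finite morphism the associated points of the pushforward of a coherent sheaf are the images of the associated points of that sheaf. Therefore every associated point of $f_*(L_{|D})$ is the image of some generic point of a component of $D$, and by the non-contraction hypothesis this image is the generic point of $B$. This shows $f_*(L_{|D})$ is torsion-free.

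The only subtle point is the potential non-reducedness of $D$: one must use that $D$ is Cartier on a smooth surface to rule out embedded associated primes of $\caO_D$ at closed points, which would produce torsion after pushforward even if the corresponding reduced component of $D$ dominates $B$. Once this is observed, the rest of the argument is formal.
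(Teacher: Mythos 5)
Your proof is correct. The paper itself declares this lemma ``immediate'' and leaves the proof to the reader, so there is no argument in the text to compare against; your write-up supplies exactly the details one would want. The three ingredients all check out: an effective Cartier divisor on a smooth surface is Cohen--Macaulay of pure dimension $1$, so $\caO_D$ (and hence the invertible $\caO_D$-module $L_{|D}$) has no embedded associated points even when $D$ is non-reduced or has components meeting each other; the non-contraction hypothesis makes $f_{|D}$ proper and quasi-finite, hence finite (in particular affine), so associated points of the pushforward are the images of associated points upstairs; and on the smooth curve $B$ torsion-freeness is equivalent to having only the generic point as associated point. You are also right to flag the non-reducedness of $D$ as the one genuinely non-obvious issue --- an embedded point of $L_{|D}$ would indeed produce torsion in $f_*(L_{|D})$ even if every reduced component of $D$ dominated $B$, and Cohen--Macaulayness is what rules this out.
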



\section{The global adjoint map and supporting divisors}

The main topic of this Section are {\em adjoint images}, which have proved to be a useful tool to study both infinitesimal and local deformations of irregular varieties. They were introduced in the study of curves by Collino and Pirola in \cite{ColPir}, and then extended to higher-dimensional varieties by Pirola an Zucconi in \cite{PZ}. A more intrinsical construction, in terms of {\em adjoint maps}, was given later by Pirola and Rizzi in \cite{Pir-Riz} for smooth families of curves. The aim of this section is to generalize the construction of the adjoint map both to first-order deformations of higher-dimensional (irregular) varieties, and to all the fibres of a family of curves (in particular, to fibred surfaces). This second generalization will allow us to produce supporting divisors under suitable assumptions on the fibration.

\subsection{Adjoint map of an infinitesimal deformation}

We first introduce adjoint maps. Although our main applications deal with curves, most of the constructions and basic results also work for higher dimensions. Hence, we present adjoint maps in their most general form, for varieties of arbitrary dimension. 

Let $X$ be a smooth projective variety of dimension $d$. For any integer $1 \leq k \leq d$ we consider the map
\beqn
\psi_k: \bigwedge^k H^0\left(X,\Omega_X^1\right) \longra H^0\left(X,\Omega_X^k\right)
\enqn
given by wedge product. Given a subspace $W \subseteq H^0\left(X,\Omega_X^1\right)$, we define
\beqn
W^k = \psi_k\left(\bigwedge^k W\right) \subseteq H^0\left(X,\Omega_X^k\right).
\enqn
In particular, for $k=d$, we have $W^d \subseteq H^0\left(X,\omega_X\right)$.

\begin{definition}
If $W^d \neq 0$, denote by $D_W$ the base divisor of the linear series $\left|W^d\right| \subseteq \left|\omega_X\right|$.
\end{definition}

Consider now a first-order infinitesimal deformation $\caX \ra \Delta=\Spec\mC\left[\epsilon\right]/\left(\epsilon^2\right)$ of $X$, corresponding to an extension class $\xi \in \Ext_{\caO_X}^1\left(\Omega_X^1,\caO_X \otimes T_{\Delta,0}^{\vee}\right) \cong H^1\left(X,T_X\right) \otimes T_{\Delta,0}^{\vee}$, the Kodaira-Spencer class of the sequence of vector bundles
\beqn
0 \longra N_{X/\caX}^{\vee} = \caO_X \otimes T_{\Delta,0}^{\vee} \longra \Omega_{\caX|X}^1 \longra \Omega_X^1 \longra 0.
\enqn

The corresponding connecting homomorphism
\beqn
\partial_{\xi} = \cup\,\xi: H^0\left(X,\Omega_X^1\right) \longra H^1\left(X,\caO_X\right) \otimes T_{\Delta,0}^{\vee}
\enqn
is given by cup-product with $\xi$. Denote by
\beqn
K = K_{\xi} = \ker \partial_{\xi} = \im\left(H^0\left(X,\Omega_{\caX|X}^1\right) \longra H^0\left(X,\Omega_X^1\right)\right)
\enqn
the subspace of 1-forms on $X$ that can be extended to $\caX$, and assume $\dim K_{\xi} \geq d+1$ (in particular, $q\left(X\right) \geq d+1$).

Given any subspace $W \subseteq K_{\xi}$, denote by $\widetilde{W} \subseteq H^0\left(X,\Omega_{\caX|X}^1\right)$ its preimage, so that we have the following exact sequence
\beqn
0 \longra T_{\Delta,0}^{\vee} \longra \widetilde{W} \longra W \longra 0,
\enqn
from which we obtain the presentation
\beqn
T_{\Delta,0}^{\vee} \otimes \bigwedge^d \widetilde{W} \stackrel{\wedge}{\longra} \bigwedge^{d+1} \widetilde{W} \longra \bigwedge^{d+1} W \longra 0.
\enqn
Wedge product induces also a map
\beqn
\bigwedge^{d+1} \widetilde{W} \longra H^0\left(X,\Omega_{\caX|X}^{d+1}\right) \cong T_{\Delta,0}^{\vee} \otimes H^0\left(X,\omega_X\right),
\enqn
and the image of $T_{\Delta,0}^{\vee} \otimes \bigwedge^d \widetilde{W}$ maps onto $T_{\Delta,0}^{\vee} \otimes W^d$. Hence, there is a well-defined map
\beq \label{adj-map-1}
\nu_W: \, \bigwedge^{d+1} W \longra T_{\Delta,0}^{\vee} \otimes \left(H^0\left(X,\omega_X\right)/W^d\right)
\enq
completing the diagram below.
\beq \label{diag-def-nu_W}
\xymatrix{
T_{\Delta,0}^{\vee} \otimes \bigwedge^d \widetilde{W} \ar[r] \ar@{->>}[d] & \bigwedge^{d+1} \widetilde{W} \ar[r] \ar[d] & \bigwedge^{d+1} W \ar[r] \ar@{-->}[d]^{\nu_W} & 0 \\
T_{\Delta,0}^{\vee} \otimes W^d \ar@{^(->}[]+<10mm,0mm>;[r] & T_{\Delta,0}^{\vee} \otimes H^0\left(X,\omega_X\right) \ar[r] & T_{\Delta,0}^{\vee} \otimes \left(H^0\left(X,\omega_X\right)/W^d\right) \ar[r] & 0
}
\enq

\begin{definition}
The map $\nu_W$ in (\ref{adj-map-1}) is the {\em adjoint map associated to} $W$.
\end{definition}

In their works \cite{ColPir}, \cite{PZ}, Collino, Pirola and Zucconi restrict to the case $\dim W = d+1$, while the above construction works for any subspace $W$ of dimension at least $d+1$ (in fact, if $\dim W \leq d$, then $\bigwedge^{d+1}W = 0$ and $\nu_W$ is the zero map). They also start from a basis $\eta_1,\ldots,\eta_{d+1}$ of $W$, choose arbitrary preimages $s_1,\ldots,s_{d+1} \in H^0\left(X,\Omega_{\caX|X}^1\right)$, and show that the class $\left[w\right]$ of $s_1\wedge\cdots\wedge s_{d+1}$ in $H^0\left(X,\omega_S\right)/W^d$ is well-defined up to scalar (depending on a chosen isomorphism $T_{\Delta,0} \cong \mC$ and the basis $\left\{\eta_i\right\}$). They call $\left[w\right]$ {\em an} adjoint class of $W$, which in our setting is the image of $\eta_1\wedge\cdots\wedge\eta_{d+1}$ by the adjoint map $\nu_W$.

We can now state one of the most powerful results about adjoint images (or maps): the Adjoint Theorem. It was first proven by Collino and Pirola for curves (\cite{ColPir} Th. 1.1.8), and then it was generalized to arbitrary dimensions by Pirola and Zucconi (\cite{PZ} Th. 1.5.1). Recall that the deformation $\xi$ is said to be supported on an effective divisor $D \subset X$ if 
\beqn
\xi \in \ker\left(H^1\left(X,T_X\right) \longra H^1\left(X,T_X\left(D\right)\right)\right).
\enqn

\begin{theorem}[Adjoint Theorem, \cite{PZ} Thm.1.5.1, \cite{ColPir} Thm.1.1.8 for curves] \label{thm-adj}
Let $W \subseteq K_{\xi} \subseteq H^0(X,\omega_X)$ be a $\left(d+1\right)$-dimensional subspace such that $W^d \neq 0$, and let $D=D_W$ be the base locus of the corresponding linear series $|W^d| \subseteq \left|\omega_X\right|$. If the adjoint map of $W$ is the zero map, then $\xi$ is supported on $D$.
\end{theorem}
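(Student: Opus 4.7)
The goal is to show that the pulled-back extension
$$\xi_D\colon 0 \longra \caO_X\otimes T_{\Delta,0}^\vee \longra \caF_D \longra \Omega_X^1(-D) \longra 0$$
splits, equivalently that the inclusion $\Omega_X^1(-D)\hookrightarrow\Omega_X^1$ lifts to a rank-$d$ subsheaf of $\caF = \Omega_{\caX|X}^1$. My plan is to construct this lift directly out of the lifted sections in $\widetilde W \subseteq H^0(X,\Omega_{\caX|X}^1)$, using the vanishing of $\nu_W$ as the only engine.

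First, fix a basis $\eta_1,\ldots,\eta_{d+1}$ of $W$, lifts $s_i\in\widetilde W$, and a generator $\tau$ of $T_{\Delta,0}^\vee\subseteq\widetilde W$. Under the natural identification $\Omega_{\caX|X}^{d+1}\cong T_{\Delta,0}^\vee\otimes\omega_X$ coming from $\det$ of the defining sequence, the vanishing of $\nu_W$ on the one-dimensional space $\bigwedge^{d+1}W$ is equivalent to
$$s_1\wedge\cdots\wedge s_{d+1}\in T_{\Delta,0}^\vee\otimes W^d \subseteq H^0(X,\Omega_{\caX|X}^{d+1}).$$
Using the identity $\tau\wedge s_1\wedge\cdots\widehat{s_i}\cdots\wedge s_{d+1} = \tau\otimes(\eta_1\wedge\cdots\widehat{\eta_i}\cdots\wedge\eta_{d+1})$ together with $\tau\wedge\tau=0$, I would find scalars $a_i\in\mC$ such that the modified lifts $s'_i := s_i + a_i\tau$ (which still project to $\eta_i$) satisfy
$$s'_1\wedge\cdots\wedge s'_{d+1}=0 \text{ in } H^0(X,\Omega_{\caX|X}^{d+1}).$$

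Next, let $\caM\subseteq\Omega_{\caX|X}^1$ be the coherent subsheaf generated by $s'_1,\ldots,s'_{d+1}$. The pointwise vanishing of the top wedge forces $\rk\caM\leq d$ everywhere, while the quotient $\Omega_{\caX|X}^1\twoheadrightarrow\Omega_X^1$ carries $\caM$ onto the subsheaf $\caW$ generated by $\eta_1,\ldots,\eta_{d+1}$. Off the base divisor $D$ some $d$-fold wedge $\eta_{i_1}\wedge\cdots\wedge\eta_{i_d}$ is nonzero, so $\caW_p=\Omega_{X,p}^1$ at every $p\notin D$, and a dimension count forces $\caM\to\Omega_X^1$ to be an isomorphism of rank-$d$ sheaves on $X\setminus D$, already splitting $\xi$ there. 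To globalize, I would pass to reflexive hulls inside the locally free ambient sheaves: since $D$ is exactly the base divisor of $|W^d|$, the image of $\bigwedge^d\caW\to\omega_X$ equals $\omega_X(-D)$ outside a codimension-two set, so $\det\caW^{\vee\vee}=\omega_X(-D)=\det\Omega_X^1(-D)$. Two reflexive rank-$d$ subsheaves of $\Omega_X^1$ with the same determinant that agree in codimension one must coincide, giving $\caW^{\vee\vee}=\Omega_X^1(-D)$. The corresponding saturation of $\caM$ inside $\Omega_{\caX|X}^1$ is then a rank-$d$ subsheaf projecting isomorphically onto $\Omega_X^1(-D)\subseteq\Omega_X^1$, which is the sought splitting of $\xi_D$.

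The hard part will be this last reflexive-hull step when $d\geq 2$: controlling the codimension-two loci where $\caW$ can fail to equal $\Omega_X^1(-D)$, and checking that the saturation of $\caM$ inside $\Omega_{\caX|X}^1$ really stays of rank exactly $d$ and lifts the inclusion faithfully. For the curve case $d=1$ — the original Collino--Pirola setting — $\caW$ is itself the sub-line bundle $\omega_X(-D)$, the reflexive hull is trivial, and the argument collapses to the two-form identity $s'_1\wedge s'_2=0$; this is presumably why the curve case long preceded the general statement of Pirola--Zucconi.
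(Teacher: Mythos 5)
The paper offers no proof of Theorem \ref{thm-adj}: it is quoted from \cite{PZ} (Thm.\ 1.5.1) and \cite{ColPir} (Thm.\ 1.1.8), so your proposal can only be measured against those sources and on its own terms. Your opening moves are exactly the classical ones and are correct: the vanishing of $\nu_W$ lets you correct the lifts by multiples of $\tau$ so that $s'_1\wedge\cdots\wedge s'_{d+1}=0$, and the subsheaf $\caM\subseteq\Omega^1_{\caX|X}$ they generate then has rank $\leq d$. In fact you undersell this step: since $\caM$ is torsion-free (a subsheaf of a locally free sheaf), surjects onto $\caW=\langle\eta_1,\dots,\eta_{d+1}\rangle\subseteq\Omega^1_X$, and $\rk\caW=d$ because $W^d\neq 0$, the kernel of $\caM\twoheadrightarrow\caW$ is a rank-zero subsheaf of a torsion-free sheaf, hence zero. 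So $\caM\cong\caW$ \emph{everywhere}, not just off $D$, and $\caW$ lifts to $\Omega^1_{\caX|X}$ with no saturation needed. For $d=1$ one has $\caW=\omega_X(-D)=\Omega^1_X(-D)$ on the nose and your proof is complete; this is precisely the Collino--Pirola argument.

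The genuine gap is the globalization step for $d\geq 2$, and it is not merely ``hard'' as you flag it --- the identity you propose to prove there is false. You assert $\det\caW^{\vee\vee}=\omega_X(-D)=\det\left(\Omega^1_X(-D)\right)$, but $\det\left(\Omega^1_X\otimes\caO_X(-D)\right)=\omega_X(-dD)$, which differs from $\omega_X(-D)$ as soon as $d\geq 2$ and $D\neq 0$. Concretely, at a generic point of a component of $D$ of multiplicity $m$ the elementary divisors $a_1,\dots,a_d$ of $\caW\subseteq\Omega^1_X$ satisfy $\sum a_i=m$, whereas those of $\Omega^1_X(-D)$ all equal $m$; so $\caW^{\vee\vee}$ and $\Omega^1_X(-D)$ already disagree in codimension one and cannot coincide. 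What is true, and what you actually need, is only the containment $\Omega^1_X(-D)\subseteq\caW^{\vee\vee}$: it holds at every codimension-one point by the elementary-divisor computation above (each $a_i\leq m$) and extends across the codimension-two locus because both sheaves are reflexive. Then the splitting $\sigma:\caW\ra\Omega^1_{\caX|X}$ extends to $\sigma^{\vee\vee}:\caW^{\vee\vee}\ra\Omega^1_{\caX|X}$ (the ambient sheaf being locally free), and restricting along $\Omega^1_X(-D)\hookrightarrow\caW^{\vee\vee}$ splits $\xi_D$, since the pull-back of a split extension along a further inclusion is split. Note also that the containment $\Omega^1_X(-D)\subseteq\caW$ itself genuinely fails at the codimension-two part of the base locus of $|W^d|$ (where the fibre rank of $\caW$ drops but $\Omega^1_X(-D)$ is still all of $\Omega^1_X$), so the passage to reflexive hulls cannot be avoided; it just must conclude with an inclusion rather than the equality you claim.
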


Our next objective is to use Theorem \ref{thm-adj} to produce supporting divisors for a given deformation $\xi$. In particular, we want to give a condition on $\dim K_{\xi}$ that guarantees the existence of a $(d+1)$-dimensional subspace $W$ with vanishing adjoint map. Some of the following constructions and results are inspired by the study of special deformations carried out by Collino and Pirola in \cite{ColPir}, Section 1.3.

We will need the following

\begin{ass} \label{extra-ass}
For any $(d+1)$-dimensional subspace, $\bigwedge^d W$ {\em injects} into $H^0\left(X,\omega_X\right)$. According to the Generalized Castelnuovo-de Franchis Theorem (\cite{Cat}, Theorem 1.9), this can be more geometrically rephrased as ``$X$ does not admit higher-irrational pencils'' (i.e. fibrations over varieties whose Albanese map is birational and not surjective).
\end{ass}

Since this assumption holds automatically if $X = C$ is a curve, and our main applications are for curves, this is not really a restrictive condition for our purposes. Furthermore, the condition could be relaxed to ``the images $W^d \subseteq H^0\left(X,\omega_X\right)$ have all the same dimension''.

Let $\mG$ be the Grassmannian of $(d+1)$-dimensional subspaces of $K=K_{\xi}$. For any vector space $E$, denote by $E_{\mG} = E \otimes \caO_{\mG}$ the trivial vector bundle on $\mG$ with fibre $E$. As customary, denote by $\caS \subseteq K_{\mG}$ and $\caQ = K_{\mG}/S$ the tautological subbundle and quotient bundle. Note that the extra assumption above implies that $\bigwedge^d \caS$ injects in $H^0\left(X,\omega_X\right)_{\mG}$ as a vector bundle of rank $d+1$, and the quotient is also a vector bundle (of rank $p_g\left(X\right)-(d+1)$).

\begin{lemma} \label{lem-adj-holom}
The adjoint maps $\nu_W$ depend holomorphically on $W \in \mG$. More precisely, there exists a map of vector bundles
\beqn
\nu: \bigwedge^{d+1} \caS \longra T_{\Delta,0}^{\vee} \otimes \left(H^0\left(X,\omega_X\right)_{\mG}\left/\bigwedge^d \caS\right.\right).
\enqn
such that $\nu\otimes\mC\left(W\right) = \nu_W$.
\end{lemma}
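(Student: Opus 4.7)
The plan is to globalize, over the Grassmannian $\mG$, the diagram (\ref{diag-def-nu_W}) that defines $\nu_W$ for a single $W$. The key observation is that each ingredient of that diagram (the extension $0 \to T_{\Delta,0}^{\vee} \to \widetilde{W} \to W \to 0$, the wedge products, the subspace $W^d \subseteq H^0(X,\omega_X)$) has a natural tautological counterpart on $\mG$ built from the tautological subbundle $\caS \subseteq K_{\mG}$, and the construction of $\nu_W$ is itself compatible with these globalizations.

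First I would start from the long exact sequence of the extension $0 \to T_{\Delta,0}^{\vee} \to \Omega_{\caX|X}^1 \to \Omega_X^1 \to 0$, and use the fact that $K = \im(H^0(X,\Omega_{\caX|X}^1) \to H^0(X,\Omega_X^1))$ to obtain a short exact sequence of finite-dimensional vector spaces
\beqn
0 \longra T_{\Delta,0}^{\vee} \longra \widetilde{K} \longra K \longra 0,
\enqn
where $\widetilde{K} := H^0(X,\Omega_{\caX|X}^1)$. Pulling this sequence back to $\mG$ and taking the preimage of the tautological $\caS \subseteq K_{\mG}$ inside $\widetilde{K}_{\mG}$ gives a rank-$(d+2)$ vector bundle $\widetilde{\caS}$ fitting into an exact sequence
\beqn
0 \longra T_{\Delta,0}^{\vee} \otimes \caO_{\mG} \longra \widetilde{\caS} \longra \caS \longra 0
\enqn
of bundles on $\mG$ whose fibre over $W$ is exactly the extension $0 \to T_{\Delta,0}^{\vee} \to \widetilde{W} \to W \to 0$.

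Next I would take $\bigwedge^{d+1}$. Because the sub is a line bundle, $\bigwedge^{i}$ of it vanishes for $i\geq2$, so only one nontrivial step of the Koszul filtration survives, yielding an exact sequence
\beqn
0 \longra T_{\Delta,0}^{\vee} \otimes \bigwedge^d\caS \longra \bigwedge^{d+1}\widetilde{\caS} \longra \bigwedge^{d+1}\caS \longra 0.
\enqn
At the same time, the fibrewise wedge-product map into $H^0(X,\Omega_{\caX|X}^{d+1}) \cong T_{\Delta,0}^{\vee}\otimes H^0(X,\omega_X)$ (the isomorphism coming from $\bigwedge^{d+1}\Omega_{\caX|X}^1 \cong \omega_X \otimes N_{X/\caX}^{\vee}$) defines a morphism of bundles $\bigwedge^{d+1}\widetilde{\caS} \to T_{\Delta,0}^{\vee} \otimes H^0(X,\omega_X)_{\mG}$ on $\mG$. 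Restricted to the subbundle $T_{\Delta,0}^{\vee}\otimes \bigwedge^d\caS$, this map factors through $T_{\Delta,0}^{\vee}\otimes \bigwedge^d\caS \hookrightarrow T_{\Delta,0}^{\vee}\otimes H^0(X,\omega_X)_{\mG}$ (the inclusion being $\id\otimes\psi_d|_{\bigwedge^d\caS}$). Composing with the projection onto the cokernel of this inclusion and using the snake/cokernel principle, the map kills $T_{\Delta,0}^{\vee}\otimes \bigwedge^d\caS$ and therefore descends to the desired bundle morphism $\nu: \bigwedge^{d+1}\caS \to T_{\Delta,0}^{\vee} \otimes (H^0(X,\omega_X)_{\mG}/\bigwedge^d\caS)$. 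Comparing with diagram (\ref{diag-def-nu_W}), the fibre of $\nu$ at $W$ is manifestly $\nu_W$.

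The only point that requires care, and where the main obstacle lies, is that the target $H^0(X,\omega_X)_{\mG}/\bigwedge^d\caS$ must be an honest quotient \emph{bundle}, so that its formation commutes with restriction to a point. This is exactly what the Extra Assumption \ref{extra-ass} secures: the fibrewise map $\psi_d:\bigwedge^d W \to H^0(X,\omega_X)$ is injective for every $W$, so the globalized $\bigwedge^d\caS \to H^0(X,\omega_X)_{\mG}$ has constant rank $d+1$, making both the image and the cokernel locally free. Once this is in place, the commutativity of the globalized version of (\ref{diag-def-nu_W}) is immediate from the functoriality of wedge products and exterior powers, and the fibrewise identification $\nu\otimes\mC(W)=\nu_W$ follows directly from the construction.
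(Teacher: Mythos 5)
Your proposal is correct and is exactly the approach the paper takes: the paper's proof simply says to mimic the construction of $\nu_W$ with $W$ replaced by the tautological subbundle $\caS$, and it records just before the lemma that the Extra Assumption makes $H^0\left(X,\omega_X\right)_{\mG}\left/\bigwedge^d\caS\right.$ a genuine vector bundle, which is the one point you also correctly single out as needing care. Your write-up just fills in the routine details (the tautological extension, the Koszul step for $\bigwedge^{d+1}$, the descent of the wedge map) that the paper leaves implicit.
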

\begin{proof}
The proof is immediate. One only has to mimick the construction of the $\nu_W$ replacing $W$ by the tautological subbundle $\caS$.
\end{proof}

\begin{definition} \label{df-adj-bundle}
We call the map $\nu$ constructed in the previous Lemma simply the {\em adjoint map} of the deformation $\xi$. It can be seen as a section of the vector bundle
\beqn
\caA = T_{\Delta,0}^{\vee} \otimes \bigwedge^{d+1} \caS^{\vee} \otimes \left(H^0\left(X,\omega_X\right)_{\mG}\left/\bigwedge^d \caS\right.\right),
\enqn
which we call the {\em adjoint bundle}.
\end{definition}

Computing the Chern class of $\caA$ leads to the proof of the following

\begin{theorem} \label{thm-adj-inft}
If $V \subseteq K_{\xi}$ has dimension $\dim V \geq \frac{p_g\left(X\right)}{d+1}+d$, then there exists some $(d+1)$-dimensional subspace $W \subseteq V$ such that $\nu_W = 0$.
\end{theorem}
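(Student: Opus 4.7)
The plan is to realize the desired subspace $W$ as a zero of the adjoint section $\nu$ of Lemma \ref{lem-adj-holom} restricted to the sub-Grassmannian $\mG' := \mG(d+1, V) \hookrightarrow \mG$. Since $T_{\Delta,0}^\vee$ and $\bigwedge^{d+1}\caS^\vee$ are line bundles, $\caA|_{\mG'}$ has rank $p_g(X) - (d+1)$, while $\dim \mG' = (d+1)(\dim V - d - 1)$; the numerical hypothesis $\dim V \geq p_g(X)/(d+1) + d$ rewrites precisely as $\dim \mG' \geq \mathrm{rank}(\caA|_{\mG'})$. A nowhere-vanishing section of $\caA|_{\mG'}$ would produce a trivial sub-line bundle $\caO \hookrightarrow \caA|_{\mG'}$ and therefore force the top Chern class $c_{p_g - d - 1}(\caA|_{\mG'})$ to vanish in $H^*(\mG', \mQ)$. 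Hence the problem reduces to showing this top Chern class is nonzero.

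To compute it, I apply the canonical isomorphism $\bigwedge^d E \cong \det E \otimes E^\vee$ (valid for any rank-$(d+1)$ bundle $E$) to $\caS$. Tensoring the subbundle sequence $0 \to \bigwedge^d \caS \to H^0(X, \omega_X)_{\mG'} \to \caK \to 0$ (whose first term is a subbundle thanks to Assumption \ref{extra-ass}) with $\caL := \bigwedge^{d+1}\caS^\vee$, and dropping the trivial $T_{\Delta,0}^\vee$ factor, yields
\begin{equation*}
0 \longra \caS^\vee \longra \caL^{\oplus p_g} \longra \caA|_{\mG'} \longra 0.
\end{equation*}
The Chern polynomial identity is therefore $c_t(\caA|_{\mG'}) \cdot c_t(\caS^\vee) = (1 + ht)^{p_g}$, with $h = c_1(\caS^\vee) = c_1(\caL) = \sigma_1$ the Plücker generator of $\mathrm{Pic}(\mG')$. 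Using the standard identity $c_t(\caS^\vee)^{-1} = \sum_k (-1)^k \sigma_k t^k$ on a Grassmannian (with $\sigma_k = c_k(\caQ)$ the one-row Schubert class), the coefficient of $t^r$ for $r = p_g - d - 1$ gives
\begin{equation*}
c_r(\caA|_{\mG'}) = \sum_{b=0}^{r} (-1)^b \binom{p_g}{r-b}\, \sigma_1^{r-b}\, \sigma_b \; \in \; H^{2r}(\mG', \mQ).
\end{equation*}

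The main obstacle is then to verify that this alternating combination of Schubert classes is nonzero. Since $r \leq \dim \mG'$ by hypothesis, the Plücker power $\sigma_1^r$ is itself a nonzero, effective sum of Schubert classes on $\mG(d+1, \dim V)$, so the cohomological degree is low enough for nonvanishing to be possible. My plan is to expand each product $\sigma_1^{r-b}\sigma_b$ via Pieri's rule and track the coefficient of a carefully chosen Schubert class $\sigma_\lambda$ (with $\lambda$ fitting inside the $(d+1) \times (\dim V - d - 1)$ rectangle), then exploit binomial identities such as $\sum_{a=0}^r (-1)^a \binom{p_g}{a} = (-1)^r \binom{p_g - 1}{r}$ to conclude that this coefficient is nonzero. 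The combinatorial step of extracting a nonzero Schubert coefficient from the alternating sum is the heart of the proof; it does not follow from dimension count alone, but the numerical hypothesis precisely guarantees the necessary room in $H^*(\mG')$ for a surviving Schubert class to appear with nonzero coefficient.
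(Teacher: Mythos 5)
Your reduction is exactly the paper's: restrict $\nu$ to $\mG_V=Gr(d+1,V)$, observe that the hypothesis is equivalent to $r=\rk\caA_{|\mG_V}=p_g(X)-(d+1)\le\dim\mG_V$, note that a nowhere-vanishing section would kill the top Chern class, and so reduce to showing $c_r\left(\caA_{|\mG_V}\right)\neq 0$. Your Whitney-formula computation from the sequence $0\to\caS^{\vee}\to\caL^{\oplus p_g}\to\caA_{|\mG_V}\to 0$ is also correct as far as it goes. But the decisive step --- that the resulting class is nonzero --- is precisely what you have not proved. The expression
\begin{equation*}
c_r\left(\caA_{|\mG_V}\right)=\sum_{b=0}^{r}(-1)^{b}\binom{p_g}{r-b}\,\sigma_1^{r-b}\,\sigma_b
\end{equation*}
is an alternating sum of Schubert classes, and nothing in your write-up establishes that some Schubert coefficient survives; you explicitly defer ``the heart of the proof'' to an unexecuted Pieri-rule and binomial-identity computation. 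As it stands this is a genuine gap, not a routine verification: the signs destroy any manifest positivity, $\sigma_b$ may vanish when $b$ exceeds the width of the relevant rectangle, and the claim that the numerical hypothesis ``guarantees the necessary room'' is only the statement $r\le\dim\mG_V$, which does not by itself produce a nonzero coefficient.

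The paper closes this step by a sign-free positivity argument that you could adopt with almost no change to your setup: up to the trivial factor $T_{\Delta,0}^{\vee}$, one has $\caA_{|\mG_V}=\caG\otimes\caO_{\mG_V}(1)$ where $\caG=H^0\left(X,\omega_X\right)_{\mG_V}/\bigwedge^{d}\caS_V$ is \emph{globally generated} (being a quotient of a trivial bundle) and $\caO_{\mG_V}(1)=\bigwedge^{d+1}\caS_V^{\vee}$ is the very ample Pl\"ucker line bundle. The top Chern class of a twist by a line bundle gives
\begin{equation*}
c_r\left(\caA_{|\mG_V}\right)=\sum_{i=0}^{r}c_{r-i}\left(\caG\right)\,c_1\left(\caO_{\mG_V}(1)\right)^{i}=c_1\left(\caO_{\mG_V}(1)\right)^{r}+\left(\text{effective classes}\right),
\end{equation*}
where every summand is effective because the Chern classes of a globally generated bundle are represented by zero or effective cycles, and the leading term is nonzero since $\caO_{\mG_V}(1)$ is very ample and $r\le\dim\mG_V$. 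If you prefer to keep your Schubert-calculus expression, you must actually carry out the extraction of a nonzero coefficient; otherwise, replace that paragraph by the positivity argument above.
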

\begin{proof}
Denote by $\mG_V = Gr\left(d+1,V\right) \subseteq \mG$ the subvariety of $\mG$ consisting of the $(d+1)$-dimensional subspaces of $K$ contained in $V$, which is in turn a Grassmannian variety. Furthermore, the tautological subbundle $\caS_V$ of $\mG_V$ is the restriction of $\caS$, and the adjoint map $\nu$ restricts to
\beqn
\nu_V: \, \bigwedge^{d+1} \caS_V \longra T_{\Delta,0}^{\vee} \otimes \left(H^0\left(X,\omega_X\right)_{\mG_V}\left/\bigwedge^d\caS_V\right.\right)
\enqn
which is a section of the vector bundle
\beqn
\caA_V = T_{\Delta,0}^{\vee} \otimes \bigwedge^{d+1} \caS_V^{\vee} \otimes \left(H^0\left(X,\omega_X\right)_{\mG_V}\left/\bigwedge^d\caS_V\right.\right) = \caA_{|V}.
\enqn
Denoting by $Z = Z\left(\nu\right) \subseteq \mG$ the zero locus of $\nu$, and by $Z_V$ the zero locus of $\nu_V$, it is clear that $Z_V = Z \cap \mG_V$.

With these notations, the theorem says that $Z_V \neq \emptyset$. In order to prove that, we will compute the top Chern class of $\caA_V$ and show that it does not vanish. This is enough, since if a vector bundle admits a nowhere vanishing section, then its top Chern class is zero.

First of all, our only hypothesis is equivalent to
\beqn
r = \rk \caA_V = p_g\left(X\right)-(d+1) \leq (d+1)\left(\dim V - (d+1)\right) = \dim \mG_V,
\enqn
so it is indeed possible that $c_r\left(\caA_V\right) \neq 0$. Secondly, up to the trivial twisting by $T_{\Delta,0}^{\vee}$, $\caA_V$ is the globally generated bundle
\beqn
\caG = H^0\left(X,\omega_X\right)_{\mG_V}\left/\bigwedge^d\caS_V\right.
\enqn
twisted by the line bundle $\bigwedge^{d+1} \caS_V^{\vee} \cong \caO_{\mG_V}\left(1\right)$, the very ample line bundle inducing the Pl\"ucker embedding. Therefore, we can compute (see \cite{Fulton} Remark 3.2.3.(b))
\beqn
c_r\left(\caA_V\right) = \sum_{i=0}^r c_{r-i}\left(\caG\right)c_1\left(\caO_{\mG_V}\left(1\right)\right)^i = c_1\left(\caO_{\mG_V}\left(1\right)\right)^r + \left(\text{effective classes}\right) \neq 0
\enqn
because $\caO_{\mG_V}\left(1\right)$ is very ample and all the Chern classes of $\caG$ are represented by zero or effective cycles (because it is globally generated).
\end{proof}

\begin{corollary} \label{cor-adj-inft}
If $X=C$ is a curve of genus $g$, and $V \subseteq K_{\xi}$ has dimension greater than $\frac{g+1}{2}$, then there exists a two-dimensional subspace $W \subseteq V$ whose adjoint class vanishes. In particular, the deformation is supported on a divisor $D$ of degree $\deg D < 2g-2$.
\end{corollary}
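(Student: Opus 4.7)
The plan is to specialize Theorem \ref{thm-adj-inft} to the curve case and then apply the Adjoint Theorem. Since $X = C$ has dimension $d = 1$, the numerical condition in Theorem \ref{thm-adj-inft} becomes
\[
\dim V \geq \frac{p_g(X)}{d+1} + d = \frac{g}{2} + 1.
\]
Our hypothesis $\dim V > \frac{g+1}{2}$ implies $\dim V \geq \lceil \frac{g+2}{2}\rceil \geq \frac{g}{2}+1$ (checking both parities of $g$), so Theorem \ref{thm-adj-inft} applies and yields a $(d+1) = 2$-dimensional subspace $W \subseteq V$ with $\nu_W = 0$, which is the first claim.

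For the second claim, I would invoke the Adjoint Theorem (Theorem \ref{thm-adj}) for $W$. I first need $W^d = W^1 \neq 0$, but this is automatic: $W \subseteq K_\xi \subseteq H^0(C,\Omega_C^1) = H^0(C,\omega_C)$ and $W^1$ is just $W$ itself, which is nonzero. Theorem \ref{thm-adj} then gives that $\xi$ is supported on $D = D_W$, the base divisor of the linear pencil $|W| \subseteq |\omega_C|$.

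It remains to bound $\deg D$. The divisors of $|W|$ have degree $\deg \omega_C = 2g-2$, so $\deg D \leq 2g-2$, with equality only if every effective divisor in $|W|$ coincides with the base divisor $D_W$. But $\dim W = 2$ means $|W|$ is a genuine pencil (i.e.\ $\dim |W| = 1$), so it contains at least two distinct effective divisors, forcing $\deg D < 2g-2$. There is no real obstacle here: the argument is a direct specialization of Theorem \ref{thm-adj-inft} followed by the Adjoint Theorem, with a short pencil-theoretic remark on the degree of the base locus.
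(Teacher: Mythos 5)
Your argument is correct and is essentially the paper's own proof: specialize Theorem \ref{thm-adj-inft} to $d=1$ (noting that Assumption \ref{extra-ass} is automatic for curves), apply the Adjoint Theorem to the resulting two-dimensional $W$, and take $D$ to be the base divisor of the pencil $\left|W\right| \subseteq \left|\omega_C\right|$, which has degree strictly less than $2g-2$. Your additional remarks --- the parity check showing $\dim V > \frac{g+1}{2}$ implies $\dim V \geq \frac{g}{2}+1$, and the observation that a genuine pencil forces strict inequality on the degree of its base locus --- merely make explicit what the paper leaves implicit.
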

\begin{proof}
The first assertion follows directly from Theorem \ref{thm-adj-inft}, taking $d=1$. For the second assertion, take $D$ the base divisor of the pencil $\left|W\right| \subseteq \left|\omega_C\right|$.
\end{proof}

\begin{remark}
For higher dimensions, the inequality $\dim V \geq \frac{p_g\left(X\right)}{d+1}+d$, combined with the non-existence of higher irrational pencils (assumption \ref{extra-ass}), becomes a quite restrictive condition. For example, the only surfaces to which this method could be applied are those satisfying
\beqn
2q\left(X\right)-3 \leq p_g\left(X\right) \leq 3\left(q\left(X\right)-2\right),
\enqn
where the first inequality is the Castelnuovo-de Franchis inequality.
\end{remark}


\subsection{Global adjoint map}

In this last section we extend the previous constructions to the case of a fibration over a compact curve. Since the extra assumption \ref{extra-ass} is quite restrictive, we will only consider the case when the fibres are curves (though some constructions carry over to some cases with higher-dimensional fibres).

Therefore, let $f: S \ra B$ be a fibration of a surface $S$ over a compact curve $B$, and let
\beqn
V = V_f = H^0\left(S,\Omega_S^1\right) / f^*H^0\left(B,\omega_B\right),
\enqn
which has dimension $q_f$, the {\em relative irregularity} of $f$. It is easy to see that $V$ naturally injects into $H^0\left(F,\omega_F\right)$ for any smooth fibre $F$ of $f$. Furthermore, if $\xi \in H^1\left(F,T_F\right)$ is the infinitesimal deformation of $F$ induced by $f$, then $V$ is contained in the kernel $K_{\xi}$
\beqn
\partial_{\xi} = \cup\,\xi: H^0\left(F,\omega_F\right) \longra H^1\left(F,\caO_F\right).
\enqn

We have previously constructed the adjoint map associated to any subspace of $K_{\xi}$. In order to obtain a ``global'' adjoint map valid for all the fibres, we restrict now to a slightly less general version, considering only subspaces $W$ of $V$.

All the injections $V \subseteq H^0\left(F,\omega_F\right)$ for smooth fibres glue together into an inclusion of vector bundles
\beq \label{eq-incl-V_B}
V_B = V \otimes \caO_B \longhra f_* \omega_{S/B}
\enq
whose cokernel $\caG$ is also a vector bundle (see \cite{Fuj} Theorem 3.1 and its proof). In fact, the results of Fujita say moreover that the inclusion splits (so $f_* \omega_{S/B} \cong V_B \oplus \caG$) and $\caG$ has some good cohomological properties, but we will not use them in the sequel.

The inclusion (\ref{eq-incl-V_B}) can be more explicitly constructed as follows (note the similarities with the definition of $\alpha$ in Lemma \ref{lem-versus}). Wedge product gives a map $H^0\left(S,\Omega_S^1\right) \otimes \omega_B \ra f_*\omega_S$ mapping $\left(f^*H^0\left(B,\omega_B\right)\right) \otimes \omega_B$ to zero, so there is an induced map
\beqn
V \otimes \omega_B \longra f_*\omega_S = \left(f_* \omega_{S/B}\right) \otimes \omega_B.
\enqn
Since it is injective over a generic $b \in B$, it is everywhere injective (as a map of sheaves), and cancelling the twist by $\omega_B$ we obtain the inclusion (\ref{eq-incl-V_B}).

Denote now by $\mG = Gr\left(2,V\right)$ the Grassmannian of 2-planes of $V$, and by $S_V \subseteq V \otimes \caO_{\mG}$ the tautological subbundle. Consider the product $Y = B \times \mG$, and denote by $p_1: Y \ra B$ and $p_2: Y \ra \mG$ the natural projections. The variety $Y$ is the Grassmann bundle of 2-dimensional subspaces of $V_B$, and $\caS = p_2^* S_V$ is the corresponding tautological subbundle. Clearly, $\caS$ is a vector subbundle\footnote{By a {\em vector subbundle} of a vector bundle $V$ we mean a locally free subsheaf whose quotient is also locally free.} of $V_Y = V \otimes \caO_Y = p_1^* V_B$, hence also of $p_1^*f_*\omega_{S/B}$.

We will now reproduce the construction of the adjoint map for an infinitesimal deformation. Denote by $\widetilde{\caS} \subseteq H^0\left(S,\Omega_S^1\right) \otimes \caO_Y$ the natural preimage of $\caS$, so that
\beqn
0 \longra H^0\left(B,\omega_B\right) \otimes \caO_Y \stackrel{f^*}{\longra} \widetilde{\caS} \longra \caS \longra 0
\enqn
is an exact sequence of vector bundles. Hence we obtain the following presentation of $\bigwedge^2 \caS$,
\beq \label{global-presentation}
\widetilde{\caS} \otimes H^0\left(B,\omega_B\right) \longra \bigwedge^2 \widetilde{\caS} \longra \bigwedge^2 \caS \longra 0
\enq

The composition of the wedge product $\bigwedge^2 H^0\left(S,\Omega_S^1\right) \ra H^0\left(S,\omega_S\right) = H^0\left(B,f_*\omega_S\right)$ and the evaluation $H^0\left(B,f_*\omega_S\right) \otimes \caO_Y \cong H^0\left(Y,p_1^*f_*\omega_S\right) \otimes \caO_Y \ra p_1^*f_*\omega_S$ induces a map of vector bundles on $Y$
\beqn
\widetilde{\nu}: \, \bigwedge^2 \widetilde{\caS} \longra p_1^*f_*\omega_S.
\enqn
Clearly, this map sends the image of $\widetilde{\caS} \otimes H^0\left(B,\omega_B\right)$ into the subsheaf $\caS \otimes p_1^*\omega_B$. Hence, according to equation (\ref{global-presentation}), $\widetilde{\nu}$ induces a well-defined map of vector bundles on $Y$:
\beq \label{eq-global-adjoint}
\nu: \, \bigwedge^2 \caS \longra \left(p_1^*f_*\omega_S\right)/\left(\caS \otimes p_1^*\omega_B\right).
\enq

\begin{definition}[Global Adjoint Map] \label{df-global-adjoint}
The map $\nu$ in (\ref{eq-global-adjoint}) is the {\em global adjoint map} of the fibration $f$.
\end{definition}

\begin{remark} \label{rmk-global-adj}
It is clear from the construction that if $F=f^{-1}\left(b\right)$ is a smooth fibre of $f$, the restriction $\nu_{|\left\{b\right\}\times\mG}$ coincides with the adjoint map constructed in Definition \ref{df-adj-bundle}, restricted to the Grassmannian subvariety $Gr\left(2,V\right)$.
\end{remark}

To close both this section and the article, we will combine the global adjoint map with Corollary \ref{cor-adj-inft}, considering only vector subbundles of rank two $\caW \subseteq V \otimes \caO_B$. Such a vector subbundle defines a section
\beqn
\eta_{\caW}: B \longra Y
\enqn
of $p_1$, such that $\eta_{\caW}\left(b\right) = \caW \otimes \mC\left(b\right) \subseteq V$. Conversely, given any section $\eta: B \ra Y$ of $p_1$, it defines the vector subbundle
\beqn
\caW_{\eta} = \eta^*\caS \longhra \eta^*\left(V \otimes \caO_Y\right) = V \otimes \caO_B.
\enqn
Clearly, the assignations $\caW \mapsto \eta_{\caW}$ and $\eta \mapsto \caW_{\eta}$ are mutually inverse, giving a one-to-one correspondence between vector subbundles of $V\otimes \caO_B$ of rank 2 and sections of $p_1: Y \ra B$.

Now, given a vector subbundle $\caW$ as above, we can consider the restriction $\nu_{\caW}$ of the adjoint map $\nu$ to the curve $\eta_{\caW}\left(B\right) \cong B$, which can be seen as a map of vector bundles on $B$:
\beq \label{eq-global-adj-subb}
\nu_{\caW}: \bigwedge^2 \caW \longra \left(f_*\omega_S\right)/\left(\caW \otimes \omega_B\right).
\enq

\begin{definition}[Global Adjoint Map associated to a subbundle] \label{df-global-adj-subb}
We call the map $\nu_{\caW}$ in equation (\ref{eq-global-adj-subb}) the {\em global adjoint map associated to} the subbundle $\caW$.
\end{definition}

We are now ready to state the wanted global result.

\begin{theorem} \label{pro-adj-global}
If
\beqn
q_f > \frac{g+1}{2},
\enqn
then there exist a base change $\pi: B' \ra B$ and a rank-two vector subbundle $\caW \subseteq V \otimes \caO_{B'}$ whose associated global adjoint map vanishes identically.
\end{theorem}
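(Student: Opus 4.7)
The plan is to use the fibrewise result (Corollary \ref{cor-adj-inft}) to show that the zero scheme $Z = Z(\nu) \subseteq Y = B \times \mG$ of the global adjoint map surjects onto $B$, and then to extract a section of $p_1 : Y \to B$ after a suitable base change. Since $q_f > (g+1)/2$, Corollary \ref{cor-adj-inft} applied with $V = V_f \subseteq K_{\xi_b}$ implies that for every $b \in B^o$ there exists a $2$-dimensional subspace $W \subseteq V$ whose adjoint class vanishes. By Remark \ref{rmk-global-adj}, such a $W$ corresponds to a point of $Z \cap (\{b\}\times\mG)$, so this fibre is non-empty. Hence $p_1(Z) \supseteq B^o$, and since $Z$ is closed in $Y$ and $B^o$ is dense in $B$, $p_1|_Z : Z \to B$ is surjective.

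Next, pick an irreducible component $Z_0 \subseteq Z$ that dominates $B$. If $\dim Z_0 > 1$, replace $Z_0$ by an irreducible curve $C \subseteq Z_0$ still dominating $B$, obtained by intersecting $Z_0$ with general hyperplane sections in the Pl\"ucker embedding of $\mG$ (pulled back to $Y$). Let $\varphi : B' \to C$ be the normalisation of $C$, giving a smooth projective curve $B'$ together with a finite surjective morphism $\pi = p_1 \circ \varphi : B' \to B$ and a morphism $\sigma = p_2 \circ \varphi : B' \to \mG$. The morphism $\sigma$ classifies a rank-two vector subbundle $\caW = \sigma^* S_V \hookrightarrow V \otimes \caO_{B'}$, where $S_V$ denotes the tautological subbundle on $\mG$.

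Finally, let $f' : S' \to B'$ be the fibration obtained by desingularising $S \times_B B'$. The vanishing of the associated global adjoint map of $f'$ attached to $\caW$ follows from the naturality of the construction of the global adjoint map under base change: identifying $\caW \subseteq V \otimes \caO_{B'}$ with its image in $V_{f'} \otimes \caO_{B'}$ through the pull-back inclusion, the map $\nu_{\caW}$ is (up to natural identifications) the restriction of $(\pi \times \mathrm{id}_{\mG})^* \nu$ along the graph of $\sigma$ in $B' \times \mG$, which by construction factors through $Z(\nu)$. The main obstacle is precisely this compatibility statement: although it is morally clear from the functoriality of each ingredient entering Definition \ref{df-global-adjoint} (the inclusion $V \otimes \caO_B \hookrightarrow f_*\omega_{S/B}$, the wedge-product map into $f_*\omega_S$, and the quotient by $\caS \otimes p_1^*\omega_B$), making it precise requires tracking each piece of the construction under the base change $\pi : B' \to B$ and through the desingularisation $S' \to S \times_B B'$, in order to identify the resulting map with the global adjoint map of $f'$ restricted to $Gr(2, V \otimes \caO_{B'}) \subseteq Gr(2, V_{f'} \otimes \caO_{B'})$.
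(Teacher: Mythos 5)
Your proposal is correct and follows essentially the same route as the paper: both arguments use Corollary \ref{cor-adj-inft} fibrewise to show the zero locus $Z(\nu)$ dominates $B$, extract an irreducible curve in $Z(\nu)$ dominating $B$, normalise it to obtain $B'$ and the section classifying $\caW$, and then invoke the compatibility of the global adjoint map with base change (which the paper likewise only sketches, in the remark following the theorem).
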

\begin{proof}
Let $Z \subseteq Y$ be the zero set of the global adjoint map $\nu$, which is an analytic subvariety. By Remark \ref{rmk-global-adj}, for any regular value $b$, the set $Z_b = Z \cap \left(\left\{b\right\}\times\mG\right)$ is the vanishing set of the adjoint map of $F_b$, which is non-empty by Corollary \ref{cor-adj-inft}. Therefore, there is a component of $Z$ dominating $B$, hence it is possible to choose an irreducible curve $\widetilde{B} \subseteq Z$ dominating $B$. Let $\mu: B' \ra \widetilde{B}$ be the normalization of $\widetilde{B}$, and define $\pi$ as the composition $p_1 \circ \mu: B' \ra B$. As for the vector subbundle, let $\eta: B' \ra B' \times_B Y \cong B' \times \mG$ be the section induced from the map $B' \ra \widetilde{B} \ra Y$, and let $\caW = \caW_{\eta}$. Since the image of $\eta$ is contained in the zero locus of the adjoint map associated to the fibration $S' = S \times_B B' \ra B'$, (see next Remark), it is tautological that the global adjoint map associated to $\caW$ vanishes identically.
\end{proof}

\begin{remark} [Global Adjoint Maps and base change]
Consider a finite morphism $\pi: B' \ra B$, let $f': S' = \widetilde{S \times_B B'} \ra B'$ be the fibration obtained after change of base and desingularization, and $V'=V_{f'} = H^0\left(S',\Omega_{S'}^1\right)/\left(f'\right)^*H^0\left(B',\omega_B'\right)$ the corresponding space of relative 1-forms. Define also $\mG' = Gr\left(2,V'\right)$ and $Y' = B' \times \mG'$, and let $\nu'$ be the global adjoint map of $f'$.

Clearly $V$ injects into $V'$, and therefore $B' \times \mG$ is naturally a subvariety of $Y'$. Furthermore, the pull-back (by $\pi \times \id_{\mG}$) of $\nu$ is the restriction of $\nu'$ to $B' \times \mG$. Hence, the zero locus of $\nu'$ contains the preimage of the zero locus of $\nu$.
\end{remark}

\begin{corollary} \label{cor-adj-global}
If $f: S \ra B$ is a fibration of genus $g$ such that $q_f > \frac{g+1}{2}$, then after a base change as above, $f'$ is supported on a divisor $D$ such that $D \cdot F < 2g-2$ for any fibre $F$. Furthermore, if $f$ is relatively minimal with reduced fibres, then $D \cdot C \leq 2g\left(C\right)-2-C^2$ for any component $C$ of a fibre.
\end{corollary}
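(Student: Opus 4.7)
The plan is to deduce the corollary from Theorem \ref{pro-adj-global} by combining it with the infinitesimal Adjoint Theorem \ref{thm-adj}, globalising the construction of the base divisor of a pencil. First, I would invoke Theorem \ref{pro-adj-global} to obtain the base change $\pi : B' \ra B$ and a rank-two vector subbundle $\caW \subseteq V \otimes \caO_{B'} \subseteq (f')_*\omega_{S'/B'}$ whose global adjoint map $\nu_{\caW}$ vanishes identically. The inclusion $\caW \hookrightarrow (f')_*\omega_{S'/B'}$ corresponds by adjunction to an evaluation morphism $\phi : (f')^*\caW \ra \omega_{S'/B'}$ on $S'$, and I would define $D$ to be the largest effective divisor for which $\phi$ factors through $\omega_{S'/B'}(-D)$. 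On any smooth fibre $F' = (f')^{-1}(b')$ the restriction of $\phi$ is precisely the evaluation of the $2$-dimensional subspace $\caW_{b'} := \caW \otimes \mC(b') \subseteq H^0(F',\omega_{F'})$, so that $D_{|F'}$ is the base divisor of the pencil $|\caW_{b'}| \subseteq |\omega_{F'}|$.

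To check that $f'$ is supported on $D$, I would invoke Remark \ref{rmk-global-adj}: the fibrewise vanishing of $\nu_{\caW}$ at $b'$ coincides with the vanishing of the infinitesimal adjoint map of $\caW_{b'} \subseteq K_{\xi_{b'}}$, so the Adjoint Theorem \ref{thm-adj} implies that the Kodaira--Spencer class $\xi_{b'}$ is supported on $D_{|F'}$ for every general $b' \in B'$. This is precisely the content of Definition \ref{df-global-supp}. For the bound on entire fibres, the key observation is that $|\caW_{b'}|$ is a \emph{genuine} pencil (the subspace $V$ injects into $H^0(F',\omega_{F'})$ by construction), so $|\omega_{F'}(-D_{|F'})|$ is a base-point-free pencil defining a non-constant morphism to $\mP^1$; therefore $\deg(\omega_{F'}(-D_{|F'})) \geq 1$ and $D \cdot F \leq 2g-3 < 2g-2$ for every fibre $F$.

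For the component inequality under the relatively minimal, reduced-fibres hypothesis, I would fix a component $C$ of a fibre and use adjunction on $S'$ together with $f^*\omega_{B'}\cdot C = 0$ to get $\deg(\omega_{S'/B'|C}) = K_{S'} \cdot C = 2g(C)-2-C^2$. The maximality of $D$ forces $\phi_{|C}$ to be non-zero, since otherwise $\phi$ would factor through $\omega_{S'/B'}(-D-C)$, contradicting the definition of $D$. Hence the image of $\phi_{|C}$ is a non-zero coherent subsheaf of the line bundle $\omega_{S'/B'}(-D)_{|C}$ generated by global sections, so this line bundle admits a non-zero section and must have non-negative degree. Rearranging gives $D \cdot C \leq 2g(C)-2-C^2$. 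The main obstacle is precisely this last step: one has to ensure that the restriction of $\phi$ remains non-trivial along every fibre component (including those possibly absorbed into $D$), and it is the maximality clause in the definition of $D$, combined with the reducedness of the fibres, that guarantees this, letting the smooth-curve linear-series computation close the argument.
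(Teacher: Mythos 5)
Your proposal is correct and follows essentially the same route as the paper: take the rank-two subbundle $\caW$ from Theorem \ref{pro-adj-global}, let $D$ be the divisorial base locus of the relative evaluation map $f^*\caW \ra \omega_{S/B}$, deduce the support statement from the fibrewise Adjoint Theorem, and get the intersection bounds from the fact that $D$ restricts to the base divisor of a genuine pencil of differential forms. The only (harmless) deviation is that you keep the vertical components of the base locus inside $D$ where the paper discards those not dominating $B$; your maximality argument in fact makes the verification of $D \cdot C \leq 2g\left(C\right)-2-C^2$ cleaner for components contained in fibres.
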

\begin{proof}
Let $\caW \subseteq V \otimes \caO_{B'} \subseteq f_*\omega_{S/B}$ be the rank-two vector subbundle provided by Theorem \ref{pro-adj-global}, and consider the relative evaluation map
\beqn
f^*\caW \longra f^*f_*\omega_{S/B} \longra \omega_{S/B}.
\enqn
Then it is enough to take $D$ as the union of the divisorial components of its base locus that dominate $B$. The inequalities for the intersection products $D \cdot C$ follow because $D$ is the base locus of a pencil of differential forms on any fibre.
\end{proof}


\section{Appendix: Relative $\caExt$ sheaves}

\label{rel_ext_sheaves}

Since they play a central role in Section \ref{sect-deform}, we include here a summary of the definition and some of the main properties of the relative Ext sheaves. They were originally introduced by Grothendieck in SGA2 \cite{SGA2}, and a slightly more general exposition can be found in the first chapter of \cite{Birkar}. Some extra properties are explicitly written down in \cite{Lan-Ext}.

\begin{definition}[Relative ext sheaves, \cite{Birkar} Def. 1.1.1]
Given a morphism of schemes (or more generally, of ringed spaces) $f: X \ra Y$, and an $\caO_X$-module $\caF$, we define $\caExt_f^p\left(\caF,-\right)$ as the $p$-th right derived functor of the left-exact functor $f_*\caHom_{\caO_X}\left(\caF,-\right)$.
\end{definition}

\begin{example}[\cite{Birkar}, Def.-Remark 1.1.2]
Some particular cases:
\begin{enumerate}
\item If $Y = \Spec \mC$ is a point, then $\caExt_f^p\left(\caF,-\right) = \Ext_{\caO_X}^p\left(\caF,-\right)$, the global $\Ext$ functor. If furthermore $\caF = \caO_X$, $\caExt_f^p\left(\caO_X,-\right) = H^p\left(X,-\right)$ is the usual sheaf cohomology.

\item If $f$ is the identity (hence $Y=X$), then $\caExt_f^p\left(\caF,-\right) = \caExt_{\caO_X}^p\left(\caF,-\right)$ is the usual local $\caExt$ functor.

\item If $\caF = \caO_X$, then $f_*\caHom_{\caO_X}\left(\caO_X,-\right) = f_*$ is the usual push-forward functor, and therefore $\caExt_f^p\left(\caO_X,-\right) = R^pf_*$ are the higher-direct image functors.
\end{enumerate}
\end{example}

\begin{theorem} \label{thm-ext}
Some properties:
\begin{enumerate}
\item (\cite{Birkar} Th. 1.1.3) For any $\caO_X$-modules $\caF,\caG$, $\caExt_f\left(\caF,\caG\right)$ is the sheaf associated to the presheaf
\beqn
U \mapsto \Ext_{\caO_{f^{-1}\left(U\right)}}^p\left(\caF_{|f^{-1}\left(U\right)},\caG_{|f^{-1}\left(U\right)}\right).
\enqn
In particular, for any open subset $W \subseteq Y$,
\beqn
\caExt_f^p\left(\caF,\caG\right)_{|f^{-1}\left(W\right)} \cong \caExt_f^p\left(\caF_{|f^{-1}\left(W\right)},\caG_{|f^{-1}\left(W\right)}\right).
\enqn

\item (\cite{Birkar} Th. 1.1.4) If $\caL$ and $\caN$ are locally free sheaves of finite rank on $X$ and $Y$, respectively, then
\beqn
\caExt_f^p\left(\caF \otimes \caL,-\otimes f^*\caN\right) \cong \caExt_f^p\left(\caF,-\otimes\caL^{\vee}\otimes f^*\caN\right) \cong \caExt_f^p\left(\caF,-\otimes\caL^{\vee}\right)\otimes\caN.
\enqn

\item (\cite{Birkar} Th. 1.1.5) If $0 \ra \caF' \ra \caF \ra \caF'' \ra 0$ is an exact sequence of $\caO_X$-modules, and $\caG$ is another $\caO_X$-module, then there is a long exact sequence
\begin{multline*}
\cdots \longra \caExt_f^{p-1}\left(\caF',\caG\right) \longra \\
\longra \caExt_f^p\left(\caF'',\caG\right) \longra \caExt_f^p\left(\caF,\caG\right) \longra \caExt_f^p\left(\caF',\caG\right) \longra \\
\longra \caExt_f^{p+1}\left(\caF'',\caG\right) \longra \cdots
\end{multline*}

\item (Local-global spectral sequence, \cite{Birkar} Th. 1.2.1) Suppose $g: Y \ra Z$ is another morphism, and denote $h=g \circ f$. For any $\caO_X$-modules $\caF,\caG$ there is a spectral sequence
\beqn
E_2^{p,q}=R^pg_*\caExt_f^q\left(\caF,\caG\right) \Ra \caExt_h^{p+q}\left(\caF,\caG\right).
\enqn

\item (Coherence, \cite{Birkar} Th. 1.3.1) If $f$ is projective and $\caF$, $\caG$ are coherent, then $\caExt_f^p\left(\caF,\caG\right)$ is coherent.
\end{enumerate}
\end{theorem}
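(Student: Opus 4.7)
My plan is to derive all five items directly from the derived-functor definition. Fix an injective resolution $\caG \to \caI^{\bullet}$ in the category of $\caO_X$-modules, so that $\caExt_f^p(\caF,\caG)$ is the $p$-th cohomology of the complex $f_*\caHom_{\caO_X}(\caF,\caI^{\bullet})$. For item (1), the restriction of an injective $\caO_X$-module to $f^{-1}(W)$ is again injective (because extension-by-zero is an exact left adjoint of restriction), so $\caI^{\bullet}_{|f^{-1}(W)}$ is an injective resolution of $\caG_{|f^{-1}(W)}$. Taking sections on $W$ of $f_*\caHom_{\caO_X}(\caF,\caI^{\bullet})$ then computes the absolute Ext over $f^{-1}(W)$, and sheafification yields the claimed presheaf description; the restriction-to-opens statement follows immediately. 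For item (3), since each $\caI^q$ is injective the functor $\caHom_{\caO_X}(-,\caI^q)$ is exact, and the short exact sequence $0 \to \caF' \to \caF \to \caF'' \to 0$ produces a termwise-split short exact sequence of complexes
\[
0 \to \caHom_{\caO_X}(\caF'',\caI^{\bullet}) \to \caHom_{\caO_X}(\caF,\caI^{\bullet}) \to \caHom_{\caO_X}(\caF',\caI^{\bullet}) \to 0.
\]
Applying $f_*$ preserves the termwise-splitting, and the associated long exact cohomology sequence is precisely the one stated.

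For item (2), I would combine the natural isomorphism $\caHom_{\caO_X}(\caF \otimes \caL, \caH) \cong \caHom_{\caO_X}(\caF, \caH \otimes \caL^{\vee})$, valid when $\caL$ is locally free of finite rank, with the projection formula $f_*(\caM \otimes f^*\caN) \cong (f_*\caM) \otimes \caN$ for $\caN$ locally free of finite rank. Since tensoring the complex $\caI^{\bullet} \otimes f^*\caN$ against $\caL^{\vee}$ still yields an injective (or at least acyclic) resolution of $\caG \otimes \caL^{\vee} \otimes f^*\caN$ (both tensor factors being flat locally free of finite rank), applying these identities termwise and passing to cohomology gives the two isomorphisms in the statement.

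For item (4), the spectral sequence is the Grothendieck composition-of-functors spectral sequence associated with the factorization $h_*\caHom_{\caO_X}(\caF,-) = g_* \circ \bigl(f_*\caHom_{\caO_X}(\caF,-)\bigr)$. The main obstacle is verifying that the first functor sends injective $\caO_X$-modules to $g_*$-acyclic $\caO_Y$-modules. I would establish this by showing that if $\caI$ is an injective $\caO_X$-module, then $\caHom_{\caO_X}(\caF,\caI)$ is flasque on $X$ (a direct consequence of the injectivity of $\caI$ against inclusions of sub-$\caO_X$-modules of $\caF$ supported on complements of opens), so that $f_*\caHom_{\caO_X}(\caF,\caI)$ is flasque on $Y$; flasque sheaves are acyclic for any left-exact additive functor, in particular for $g_*$, and the spectral sequence of the composition follows from the standard Cartan--Eilenberg machinery.

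Finally, for item (5) coherence is a local question on $Y$, so I would cover $Y$ by affine opens $W$ such that, over $X_W = f^{-1}(W)$, the coherent sheaf $\caF$ admits a resolution by coherent locally free $\caO_{X_W}$-modules of finite rank (obtained from a relatively very ample line bundle and Serre vanishing, and truncated to finite length once one has fixed the Ext degree of interest). Items (2) and (3) then reduce the coherence of $\caExt_f^p(\caF,\caG)_{|W}$ to the coherence of finitely many higher direct images $R^qf_*(\caG \otimes \caE^{\vee})$ of coherent sheaves $\caG \otimes \caE^{\vee}$, which is Grothendieck's classical finiteness theorem for higher direct images under projective morphisms.
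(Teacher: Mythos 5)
The paper offers no proof of Theorem \ref{thm-ext}: it is an appendix collecting known facts, each item carrying a pointer to \cite{Birkar} (and ultimately to SGA2 and \cite{Lan-Ext}), so any actual derivation differs from the paper ``by default''. Your proposal is essentially the standard derived-functor treatment that those references give --- restriction of injectives to opens for (1), the adjunction $\caHom(\caF\otimes\caL,\caI)\cong\caHom(\caF,\caI\otimes\caL^{\vee})$ plus the projection formula for (2) (where one should note explicitly that $\caI\otimes\caL^{\vee}\otimes f^{*}\caN$ is again injective, since $\caHom(-,\caI\otimes\caM)\cong\caHom(-\otimes\caM^{\vee},\caI)$ is a composition of exact functors for $\caM$ locally free of finite rank), flasqueness of $\caHom(\caF,\caI)$ feeding the Grothendieck composite-functor spectral sequence for (4), and reduction to Grothendieck's finiteness theorem via a local resolution by sums of $\caO_X(-m)$ for (5). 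The only thing your version buys over the paper's is self-containedness; the citations buy brevity, which is appropriate since the appendix is not the point of the article.

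Two steps need repair, both minor. In item (3) you claim that $0\ra\caHom_{\caO_X}(\caF'',\caI^{q})\ra\caHom_{\caO_X}(\caF,\caI^{q})\ra\caHom_{\caO_X}(\caF',\caI^{q})\ra 0$ is termwise split. Injectivity of $\caI^{q}$ gives exactness of this sequence, not a splitting: already over $R=\mC[\epsilon]/(\epsilon^{2})$, which is injective over itself, the socle $\Hom_{R}(\mC,R)\subseteq R$ is an essential submodule and hence not a direct summand. Fortunately splitness is not what you need: for the sequence of complexes to remain exact after applying $f_{*}$ it suffices that the kernel terms $\caHom_{\caO_X}(\caF'',\caI^{q})$ be flasque, which is precisely the lemma you invoke in item (4); moving that observation forward repairs (3) completely. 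Relatedly, ``flasque sheaves are acyclic for any left-exact additive functor'' is an overstatement (it fails, for instance, for $\caHom(\caF,-)$); all you need, and all that is true, is that flasque sheaves are $g_{*}$-acyclic. Finally, in (5) the phrase ``truncated to finite length'' should be replaced by the standard induction on $p$ over the syzygy exact sequences of the resolution, using $\caExt_f^{p}(\caE,\caG)\cong R^{p}f_{*}(\caE^{\vee}\otimes\caG)$ for $\caE$ locally free of finite rank together with items (2) and (3); this is expository rather than substantive.
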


The $\caExt_f^i$ sheaves being a generalization of the $R^if_*$, they also verify a base-change theorem whose proof relies on the same techniques. More explicitly, for any $y \in Y$ there is a natural base-change map
$$\tau^i\left(y\right): \caExt_f^i\left(\caF,\caG\right) \otimes \mC(y) \ra \Ext^i_{\caO_{X_y}}\left(\caF_{|X_y},\caG_{|X_y}\right)$$
which associates to a local extension $\xi \in \Ext_{\caO_{f^{-1}\left(U\right)}}^i\left(\caF_{|f^{-1}\left(U\right)},\caG_{|f^{-1}\left(U\right)}\right)$ around $y$, the class of the restriction to $X_y$. Then the following result holds:

\begin{theorem}[Base-change theorem for $\caExt_f$, \cite{Lan-Ext}] \label{thm-base-change-ext}
Assume $\caF$ and $\caG$ are flat over $Y$. Let $y \in Y$ be a point and assume the base change homomorphism $\tau^i\left(y\right)$ to be surjective.
Then
\begin{enumerate}
\item there is a neighbourhood $U$ of $y$ such that $\tau^i\left(y'\right)$ is an isomorphism for all $y' \in U$, and
\item $\tau^{i-1}\left(y\right)$ is surjective if and only if $\caExt_f^i\left(\caF,\caG\right)$ is locally free in a neighbourhood of $y$.
\end{enumerate}
\end{theorem}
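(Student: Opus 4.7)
The plan is to reduce the theorem to the classical Grauert--Mumford cohomology-and-base-change lemma (Mumford, \emph{Abelian varieties}, \S5, or Hartshorne III.12.8--12.10), which asserts precisely the two conclusions in the statement, but for a bounded complex $K^\bullet$ of finitely generated flat modules over a Noetherian local ring $A$. The reduction amounts to constructing, locally around $y\in Y$, such a complex $K^\bullet$ over $A=\caO_{Y,y}$ whose cohomology is $\caExt_f^\bullet(\caF,\caG)$ and whose base changes $H^i(K^\bullet\otimes_A M)$ compute $\caExt_f^i(\caF,\caG\otimes_A M)$, so that in particular $H^i(K^\bullet\otimes_A k(y))\cong \Ext_{\caO_{X_y}}^i(\caF_{|X_y},\caG_{|X_y})$ and the two base-change maps coincide.

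To build $K^\bullet$, I would first localize to $Y=\Spec A$ affine and resolve $\caF$ by finite-rank locally free $\caO_X$-modules $\caE^\bullet\ra\caF$. Since each $\caE^i$ is locally free and $\caG$ is flat over $Y$, the sheaves $\caHom_{\caO_X}(\caE^i,\caG)$ are themselves flat over $Y$. Choosing a finite affine open cover of $X$ (available by properness of $f$) and forming the Cech bicomplex of $\caHom_{\caO_X}(\caE^\bullet,\caG)$ yields a total complex $L^\bullet$ of $A$-flat modules whose cohomology equals $\caExt_f^\bullet(\caF,\caG)$, via the local-to-global spectral sequence of Theorem \ref{thm-ext}.4. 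The $Y$-flatness of $\caF$ and $\caG$ guarantees that $\caE^\bullet_{|X_y}\ra\caF_{|X_y}$ remains a resolution and that $L^\bullet\otimes_A k(y)$ computes $\Ext_{\caO_{X_y}}^\bullet(\caF_{|X_y},\caG_{|X_y})$, which identifies $\tau^i(y)$ with the algebraic base-change map of $L^\bullet$. Finally, truncating $L^\bullet$ above the fibrewise cohomological dimension $N$ and replacing the tail by $\ker(L^N\ra L^{N+1})$ produces the desired bounded complex $K^\bullet$ of finitely generated flat $A$-modules, and both conclusions of the theorem follow by direct application of the Grauert--Mumford lemma to $H^i(K^\bullet\otimes_A -)$.

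The main obstacle is the truncation step: one must verify that the kernel $\ker(L^N\ra L^{N+1})$ is flat and finitely generated in a neighbourhood of $y$. This relies on the vanishing of $\caExt_f^i(\caF,\caG)$ for $i>N$ (coherence plus finite cohomological dimension of the fibres, both valid because $f$ is proper and $\caF,\caG$ are coherent) combined with the local flatness criterion applied to the flat modules $L^j$. The presence of two arguments $\caF,\caG$ and the double-complex structure (one index from the resolution $\caE^\bullet$, one from the Cech complex of $\caG$) make this more delicate than in the $R^if_*$ case; this is presumably the technical heart of \cite{Lan-Ext}. Once the bounded flat model $K^\bullet$ is secured, the rest is a formal application of Mumford's arguments with the cohomology functors $H^i(K^\bullet\otimes_A -)$.
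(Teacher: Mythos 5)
First, note that the paper does not actually prove this statement: it is quoted verbatim from Lange \cite{Lan-Ext}, with only the remark that ``the proof relies on the same techniques'' as cohomology and base change for $R^if_*$. So there is no internal proof to compare against; your proposal should be judged as a reconstruction of the cited argument. In outline it is the right reconstruction: localize to $Y=\Spec A$, resolve $\caF$ by finite locally free sheaves $\caE^\bullet$ (this needs the resolution property, e.g.\ $f$ projective, which is the setting of Theorem \ref{thm-ext}.5 and of the paper), observe that $\caHom_{\caO_X}(\caE^i,\caG)$ is $A$-flat, form the \v{C}ech--Hom total complex $L^\bullet$ whose cohomology after any base change $\otimes_A M$ computes $\caExt_f^\bullet(\caF,\caG\otimes_A M)$, and then feed a finite flat model into the Grauert--Mumford formalism. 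This is essentially Lange's route, and the flatness hypotheses on $\caF$ and $\caG$ enter exactly where you say they do.

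There is, however, one step that would fail as written: the passage from $L^\bullet$ to a bounded complex $K^\bullet$ of \emph{finitely generated} flat $A$-modules. Truncating at the fibrewise cohomological dimension $N$ and replacing the top term by $Z^N=\ker(L^N\ra L^{N+1})$ does yield a bounded complex of flat modules (the flatness of $Z^N$ follows from the universal vanishing of $H^i$ for $i>N$), but it does nothing for finite generation: the terms $L^j$ are modules of sections over affine opens of $X$ and are essentially never finitely generated over $A$, and $Z^N$ contains the image of $L^{N-1}$, so it is not finitely generated either. No amount of ``verification'' will fix this; what is needed is the separate replacement lemma (Mumford, \emph{Abelian Varieties} \S 5; EGA III 6.10.5): by descending induction one constructs a bounded complex $K^\bullet$ of finite free $A$-modules together with a map $K^\bullet\ra L^\bullet$ inducing isomorphisms $H^i(K^\bullet\otimes_A M)\ra H^i(L^\bullet\otimes_A M)$ for \emph{all} $A$-modules $M$; this uses the coherence of $\caExt_f^i(\caF,\caG)$ (Theorem \ref{thm-ext}.5) to get finitely generated cohomology, but the output complex is a new one, not a subquotient of $L^\bullet$. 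Once $K^\bullet$ is in hand, both conclusions do follow formally as you say (over the local ring $\caO_{Y,y}$ finitely generated flat modules are free, so Mumford's linear-algebra argument applies verbatim). So the gap is localized and standard to fill, but as stated your reduction stops one lemma short of the finite complex it needs.
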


{\bf Acknowledgements:}
I would like to thank Prof. Gian Pietro Pirola for the very fruitful discussions about his previous works related to this topic, and Prof. S\l awomir Rams for his ideas to fix some last minute technical details. I would also like to thank the referee for his/her several suggestions to simplify and improve the presentation of this work. Since this work is a big part of my PhD thesis, I am also especially grateful to my advisors Miguel \'Angel Barja and Juan Carlos Naranjo, because this work would not have been possible without them.


\end{document}